\def\@maketitle{\newpage
    \null
    \vskip .8truein
    \begin{center}%
     {\bf \@title \par}%
     \vskip 1.5em
     {\small
      \lineskip .5em
      \begin{tabular}[t]{c}\@author
      \end{tabular}\par}%
    \end{center}%
    \par
    \vskip .4truein}
\def\dfrac#1#2{\ds{\frac{#1}{#2}}}
\newcommand{\re}{{\mathbb R}}
\newcommand{\Z}{{\mathbb Z}}
\newcommand{\He}{{\mathbb H}}
\newcommand{\cH}{{\mathcal H}}
\newcommand{\Te}{{\mathbb T}_{\cH}}
\let\ds=\displaystyle
\def\R{{\mathbb R}}
\def\N{{\mathbb  N}}
\def\N{{\mathbb N}}
\newtheorem{theorem}{Theorem}[section]
\newtheorem{lemma}{Lemma}[section]
\newtheorem{proposition}{Proposition}[section]
\newtheorem{definition}{Definition}[section]
\newtheorem{remark}{Remark}[section]
\newtheorem{example}{Example}[section]
\newtheorem{hypothesis}{Hypothesis}[section]
\newenvironment{Proofc}[1]{\smallskip\par\noindent\textsc{#1}\quad}%
  {\hfill$\Box$\bigskip\par}
\DeclareMathOperator{\diver}{div}
\def\proof{\list{}{\setlength{\leftmargin}{0pt}
                      \parskip=0pt\parsep=0pt\listparindent=2em
                      \itemindent=0pt}\item[]\futurelet\testchar\@maybe}
\def\@maybe{\ifx[\testchar \let\next\@Opt
          \else \let\next\@NoOpt \fi \next}
\def\@Opt[#1]{{\it Proof of #1.\ }}\def\@NoOpt{{\it Proof.\ }}
\begin{document}
\title{\Large \bf The continuity equation in the Heisenberg-periodic case: a representation formula and an application to Mean Field Games.}

\author{{\large \sc Alessandra Cutr\`\i\thanks{Dipartimento di Matematica, Universit\`a di Roma Tor Vergata, cutri@mat.uniroma2.it}, Paola Mannucci\thanks{Dipartimento di Matematica ``Tullio Levi-Civita'', Universit\`a di Padova, mannucci@math.unipd.it}, Claudio Marchi \thanks{Dipartimento di Matematica ``Tullio Levi-Civita'', Universit\`a di Padova, claudio.marchi@unipd.it}, Nicoletta Tchou \thanks{Univ Rennes, CNRS, IRMAR - UMR 6625, F-35000 Rennes, France, nicoletta.tchou@univ-rennes1.fr}}} 
\maketitle

%
%
%

\begin{abstract}
We provide a representation of the weak solution of the continuity equation on the Heisenberg group $\He^1$ with periodic data (the periodicity is suitably adapted to the group law). This solution is the push forward of a measure concentrated on the flux associated with the drift of the continuity equation. Furthermore, we shall use this interpretation for proving that weak solutions to first order Mean Field Games on $\He^1$ are also mild solutions.
\end{abstract}

\section{Introduction}
This paper is devoted to provide a representation of the weak periodic solution of the continuity equation in the Heisenberg group~$\He^1$
\begin{equation}\label{cont}
\left\{\begin{array}{lll}
\partial_t m(x,t)-\diver_{\cH}  (v(x,t)\, m(x,t))=0&\qquad \textrm{in }\He^1\times (0,T)\\
m(x,0)=m_0(x)&\qquad \textrm{on }\He^1,
\end{array}\right.
\end{equation}
where $m(x,t)$ is the density of a Borel family of measures $m(t)$, the drift $v$ is a Borel vector field and $\diver_{\cH}$ is the {\it horizontal divergence} given by the vector fields generating~$\He^1$ (see Section~\ref{Preliminaries}). 
This solution is the push forward of a measure concentrated on the flux associated with the drift of the continuity equation. 
For this reason it will be called ``probabilistic representation''.
Note that, in Euclidean coordinates, the differential equation in~\eqref{cont} becomes
\begin{equation*}
\partial_t m(x,t)-\diver (v(x,t)\, m(x,t)\, B^T(x))=0 \qquad \textrm{in }\re^3\times (0,T)
\end{equation*}
where, for $x=(x_1, x_2,x_3)\in \re^3$, 
\begin{equation}\label{matrixB}
B(x):=  \begin{pmatrix}
\!\!&1& 0\!\\
\!\!&0&1&\!\\
\!\!&-x_2&x_1\!
\end{pmatrix}\in M^{3\times 2}
\end{equation}
is the matrix associated with the vector fields generating $\He^1$.
We suppose that  the drift $v$ is bounded and $Q_\cH$-periodic in the sense of section~\ref{sub:periodicity} and we study $Q_\cH$-periodic solutions~$m$. More precisely, 
in our main Theorem~\ref{821} we get a representation formula analogous to the one in ~\cite[Theorem 8.2.1]{AGS}. Indeed we note that, if the periodic problem is interpreted as a problem in $\He^1$, then ~\cite[Theorem 8.2.1]{AGS} does not apply because the global summability assumption   \cite[equation (8.1.21)]{AGS} for the drift does not hold.  Nevertheless in our previous paper  ~\cite{MMT} we obtained a representation formula as in ~\cite[Theorem 8.2.1]{AGS} in the Heisenberg group but we required the compactness of the support of $m_0$.
 As for the classical case in \cite{AGS}, to get the probabilistic representation of the solution to~\eqref{cont},
the key ingredient is a ``superposition principle'' in the Heisenberg periodic setting (see \eqref{superA}) which allows to prove that there exists a measure concentrated on the solutions of the characteristic system of ODE 
\begin{equation}\label{ODE}
{Y}'(t)=v(Y(t),t)\,B^T(Y(t))\quad \textrm{for }t\in [0,T],\qquad 
Y(s)=x, \quad s\in [0,T].
\end{equation}
To this end, the key results are Lemma~\ref{8110nostro} and Lemma~\ref{nostro819} which rely on the properties of the distance on~$\He^1$ and on the pavage in~$\He^1$. 

As an application of this result we study evolutive $Q_\cH$-periodic first order Mean Field Games (briefly, MFG) in the Heisenberg group~$\He^1$:
\begin{equation}\label{eq:MFGintrin}
\left\{\begin{array}{lll}
(i)&\quad-\partial_t u+\frac{|D_{\cH}u|^2}{2}=F[m(t)](x)&\qquad \textrm{in }\He^1\times (0,T)\\
(ii)&\quad\partial_t m-\diver_{\cH}  (m D_{\cH}u)=0&\qquad \textrm{in }\He^1\times (0,T)\\
(iii)&\quad m(x,0)=m_0(x),\quad u(x,T)=G[m(T)](x)&\qquad \textrm{on }\He^1,
\end{array}\right.
\end{equation}
where $D_{\cH}$ is the {\it horizontal gradient}, the couplings $F$ and $G$ are strongly regularizing operators and $m_0$ is the initial periodic distribution of players.
Here, the Hamiltonian is noncoercive in the gradient term.
Let us recall that MFG have been introduced by~\cite{LL1,LL2} for describing the interaction of an infinite population of rational and indistinguishable agents.
In the MFG systems, the functions~$u$ and~$m$ are respectively the value function for the generic player and the density of the population; this interpretation motivates the fact that the data are $u(x,T)$ and $m(x,0)$, namely that the MFG systems are of forward-backward type. In this model the agents have forbidden directions: they follow ``horizontal'' trajectories given in terms of the vector fields generating the Heisenberg group.
The Heisenberg group can be seen as a non-Euclidean space which is endowed with a (noncommutative) group operation, a family of dilations and a sub-Riemannian structure. This framework guarantees that any couple of points in $\He^1$ can
be connected by a concatenation of a finite number of ``horizontal'' trajectories (by Chow's theorem). We remark that, differently from our previous results in \cite{MMMT, MMT}, we obtain the existence of a weak solution to problem~\eqref{eq:MFGintrin} by a vanishing viscosity method with the horizontal Laplacian instead of the Euclidean one; this procedure is needed for preserving the $Q_\cH$-periodicity of the problem.
Afterwards, using Theorem~\ref{821}, we prove that this solution is in fact a mild solution in the sense introduced in~\cite{CC}.\\

\vskip .2cm
\noindent\underline{Notations.}
For any function $u:\re^3\times\re\ni (x,t)\to u(x,t)\in \re$, $Du$ and~$D^2u$ stand for the Euclidean gradient and respectively Hessian matrix with respect to~$x$.
For any compact set $A$ of $\re^3$, we denote by $C^2(A)$ the space of functions with continuous second order derivatives endowed with the norm $\|f\|_{C^2(A)}:=\sup_{x\in A}[|f(x)|+|Df(x)|+|D^2f(x)|]$.
For any open set $A$ of $\re^3$, we denote by $L^\infty(A)$ the space of functions~$f:A\to \re$ with $\textrm{ess}\sup f<\infty$.\\
For any complete separable metric space $X$, ${\mathcal M}(X)$ and ${\mathcal P}(X)$ denotes the set of nonnegative Radon measures on~$X$, and respectively of Borel probability measures on~$X$. For any complete separable metric spaces $X_1$ and $X_2$, any measure $\eta\in{\mathcal P}(X_1)$ and any function $\phi:X_1\to X_2$, we denote by $\phi\#\eta\in{\mathcal P}(X_2)$ the {\it push-forward} of~$\eta$ through~$\phi$, i.e.  $\phi\#\eta(B):=\eta(\phi^{-1}(B))$, for any $B$ measurable set, $B\subset X_2$ (see~\cite[section~5.2]{AGS}). For a function $m\in C^0([0,T], {\mathcal P}(X))$, $m_t$ stands for the probability $m(\cdot, t)$ on $X$.


\section{Preliminaries: the Heisenberg group $\He^1$}
\label{Preliminaries}
We introduce the following noncommutative group structure on $\re^3$. We refer to \cite{BLU} for a complete overview on the Heisenberg group. 

\begin{definition}
\label{Hei}
The $3$-dimensional Heisenberg group $\He^1$ is the space $\R^3$,
endowed with the following noncommutative group operation: $\forall x=({x}_1,{x}_2,x_3)$, $y=({y}_1,{y}_2,y_3)\in \R^{3}$, 
\begin{equation}
\label{Group_Law}
x\oplus y:= (x_1+y_1, x_2+y_2, x_3+y_3-x_2y_1+x_1y_2).
\end{equation}

\end{definition}
The law  $x\oplus y$ is called the $x$ {\em {left translation of}} $y$.
We call  $x^{-1}$ the point such that $x^{-1}\oplus x=x\oplus x^{-1}=0$. Note that $x^{-1}=(-x_1, -x_2, -x_3)$. 
In $\He^1$ we define a dilations' family as follows. 
\begin{definition}
\label{DilH}
The dilations in the Heisenberg group are the family of group homeomorphisms defined as, for all $\lambda>0$,  
$\delta_{\lambda}:\He^1\to \He^1$  with
\begin{equation}
\label{Dialtions}
\delta_{\lambda}(x)=(\lambda\, {x}_1,\lambda\, {x}_2, \lambda^2\,x_3),
\quad \forall\;
x=({x}_1,  {x}_2, x_3)\in \He^1.
\end{equation}
\end{definition}
We say that $\He^1$ is generated by the two vector fields associated with the columns of~$B$,
\begin{equation}\label{vectorFields}
 X_1(x)=\left(\begin{array}{c}1 \\0 \\
 -x_2\end{array}\right)\quad
\textrm{and}
\quad 
X_2(x)=\left(\begin{array}{c}0 \\1 \\x_1\end{array}\right),
\quad \forall\, x=(x_1,x_2,x_3)\in \He^1.
\end{equation}
By these vectors we define the linear differential operators, still called $X_1$ and $X_2$
\begin{equation}\label{VFD}
X_1=\partial_{x_1}-x_2\partial_{x_3},\ X_2=\partial_{x_2}+x_1\partial_{x_3}.
\end{equation}
Note that their commutator $[X_1,X_2]:=X_1\,X_2-X_2X_1$ verifies: $[X_1,X_2]=2\partial_{x_3}$; hence, together with their commutator $[X_1,X_2]$, they span all $\re^3$. 
The fields $X_1$ and $X_2$ are left-invariant vector fields, i.e. 
for all $u\in C^{\infty}(\He^1)$ and for all fixed $y\in \He^1$
\begin{equation}\label{LIVF}
X_i(u(y\oplus x))=(X_iu)\,(y\oplus x),\ i=1,2.
\end{equation}
For any regular real-valued function $u$, we shall denote its horizontal gradient and its horizontal Laplacian by $D_\cH u:= (X_1u, X_2u)$ and respectively $\Delta_\cH u:=X_1^2u+X_2^2u$ and we observe $D_\cH u=Du B$ and $\Delta_\cH u=\textrm{tr}(D^2u\,BB^T)$.
For any regular $u=(u_1,u_2):\He^1\to \re^2$, we denote its horizontal divergence by $\diver_{\cH}u:=X_1u_1+X_2u_2$ and we note that the left-invariance of $X_i$ ($i=1,2$) entails the left-invariance of $\diver_{\cH}$. We have: $\diver_{\cH}(D_\cH u)=\Delta_\cH u$.\\
The norm and the distance associated with the group law are defined as:
\begin{equation}\label{norm}
\|x\|_\cH:=((x_1^2+x_2^2)^2+x_3^2)^{1/4},\quad\quad   d_\cH(x,y):=\|y^{-1}\oplus x\|_\cH.
\end{equation}

For any domain $U\subset \He^1\times[0,T]$, any $k\in\N$ and any $\delta\in(0,1]$, we denote by $C^{k+\delta}_\cH(U)$ (resp. $C^{k+\delta}_{\cH, loc}(U)$) the (resp. local) parabolic H\"older space adapted to the vector fields~$X_1$ and~$X_2$ (for instance, see~\cite[Section 4]{BB07} or~\cite[Definition 10.4]{BBLU}). For $\delta=0$ or $k=0$, we simply denote $C^{k}_\cH(U)$ and respectively $C^{\delta}_\cH(U)$.

\subsection{Periodicity in the Heisenberg group}\label{sub:periodicity}

The notion of periodicity is introduced by the law $\oplus$.
Let $Q_\cH =[0,1)^3$.
We can construct a tiling of $\He^1$ by the {\em property of pavage}:
for every $x\in\He^1$ there exists a unique $n_\cH(x)\in \Z^3$, such that there exists a unique $q_\cH =q_\cH (x)\in Q_\cH $ with $n_\cH(x)\oplus q_\cH =x$.
Following~\cite{BMT2,BMT} (see also  \cite{BW}), we define the {\em $Q_\cH $-periodicity} on $\He^1$ with respect to this reference pavage.
\begin{definition}\label{QHper}
A function $f$ on $\He^1$ is said $Q_\cH $-periodic if  
$$f(x)=f(q_\cH (x))\qquad\forall x\in\He^1.$$
\end{definition}
We will denote by $C^{\infty}_{Q_\cH , per}$ the set of the functions $f\in C^{\infty}(\He^1)$ that are $Q_\cH $-periodic.
The definition of $Q_\cH $-periodicity is equivalent to the following definition of {\em $1_\cH $-periodicity}:
\begin{definition}
A function $f$ defined on $\He^1$ is said $1_\cH $-periodic if there holds
$$f(n\oplus x)=f(x) \qquad  \forall x\in\He^1,\, n\in \Z^3.$$
\end{definition}
\begin{lemma}\label{lemma:period}
A function $f$ is $Q_\cH $-periodic if and only if it is $1_\cH $-periodic.
\end{lemma}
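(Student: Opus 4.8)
The plan is to deduce both implications from the uniqueness of the pavage decomposition, the associativity of $\oplus$, and the single structural fact that the integer lattice $\Z^3$ is a subgroup of $(\He^1,\oplus)$. I would begin by checking this last fact directly from the group law~\eqref{Group_Law}: for $n,m\in\Z^3$ one has $n\oplus m=(n_1+m_1,\,n_2+m_2,\,n_3+m_3-n_2m_1+n_1m_2)$, whose third coordinate is again an integer because the correction $-n_2m_1+n_1m_2$ is integer-valued; together with $n^{-1}=(-n_1,-n_2,-n_3)\in\Z^3$ this shows that $(\Z^3,\oplus)$ is a subgroup of $\He^1$.

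For the implication ``$1_\cH$-periodic $\Rightarrow$ $Q_\cH$-periodic'' I would fix $x\in\He^1$ and use the pavage to write $x=n_\cH(x)\oplus q_\cH(x)$ with $n_\cH(x)\in\Z^3$ and $q_\cH(x)\in Q_\cH$. Applying $1_\cH$-periodicity with integer point $n_\cH(x)$ and base point $q_\cH(x)$ gives $f(x)=f(n_\cH(x)\oplus q_\cH(x))=f(q_\cH(x))$, which is precisely the defining identity of $Q_\cH$-periodicity.

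The converse ``$Q_\cH$-periodic $\Rightarrow$ $1_\cH$-periodic'' is the more delicate direction, and its heart is to show that the projection $q_\cH$ is invariant under integer left translations, namely $q_\cH(n\oplus x)=q_\cH(x)$ for all $n\in\Z^3$. Writing again $x=n_\cH(x)\oplus q_\cH(x)$ and invoking associativity yields $n\oplus x=(n\oplus n_\cH(x))\oplus q_\cH(x)$; since $n\oplus n_\cH(x)\in\Z^3$ by the subgroup property while $q_\cH(x)\in Q_\cH$, the uniqueness of the pavage decomposition forces $q_\cH(n\oplus x)=q_\cH(x)$ (and $n_\cH(n\oplus x)=n\oplus n_\cH(x)$). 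Then $Q_\cH$-periodicity gives $f(n\oplus x)=f(q_\cH(n\oplus x))=f(q_\cH(x))=f(x)$, as required.

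The only point where the noncommutative structure genuinely enters — and hence the main obstacle to watch — is the verification that $\Z^3$ is closed under $\oplus$, i.e. that the bilinear term $-n_2m_1+n_1m_2$ in the third coordinate preserves integrality. Once this is secured, the uniqueness of the pavage makes both implications immediate, so no further estimates are needed.
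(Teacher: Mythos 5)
Your proof is correct and follows essentially the same route as the paper: both directions rest on the uniqueness of the pavage decomposition, and the converse implication hinges on the identity $q_\cH(n\oplus x)=q_\cH(x)$, obtained exactly as in the paper by writing $n\oplus x=(n\oplus n_\cH(x))\oplus q_\cH(x)$ and using that $\Z^3$ is closed under $\oplus$. Your only addition is the explicit verification that the bilinear term $-n_2m_1+n_1m_2$ keeps $\Z^3$ a subgroup, a fact the paper uses implicitly.
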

\begin{proof}
By the pavage property if $f$ is $1_\cH $-periodic then is $Q_\cH $-periodic. Conversely, for any $x\in\He^1$ there exist unique $n_\cH(x) $ and $q_\cH(x) $ such that $x=n_\cH\oplus q_\cH $. For any $n'\in\Z^3$ we write $n'\oplus x= n'\oplus n_\cH(x)\oplus q_\cH(x)$. Since $n'\oplus n_\cH\in\Z^3$ then $q_\cH (n'\oplus x)= q_\cH (x)$ and by the $Q_\cH $-periodicity we get $f(n'\oplus x)=f(q_\cH (n'\oplus x))= f(q_\cH (x))=f(x)$, for any $n'\in\Z^3$.
\end{proof}
\begin{definition}\label{Htoro}
We denote by $\Te$ the torus in the Heisenberg group~$\He^1$, namely $\He^1/\Z^3$ using the following equivalence law: $x\sim y$ if there exists $n\in \Z^3$ such that $n\oplus x=y$.
The torus is naturally endowed with the distance induced by $d_\cH$: for any $x,y\in \Te$,
\begin{equation*}
d_{\Te}(x,y):=\inf d_\cH(x',y')
\end{equation*}
where the infimum is performed over all the couples $(x',y')\in \He^1\times \He^1$ with $x\sim x'$, $y\sim y'$.
\end{definition}
\begin{remark} Lemma~\ref{lemma:period} ensures that $x\sim x'$ if and only of $q_\cH (x)=q_\cH (x')$. It is worth to observe that the Heisenberg torus~$\Te$ does not coincide with the Euclidean torus; especially, $\Te$ is not obtained identifying the points of two opposite faces of~$\overline{Q_\cH}$ with the same two coordinates. As a matter of facts, this happens between the two faces given by~$x_3=0$ and~$x_3=1$. For completeness, let us write the identification of points $(1,x_2,x_3)$ with $(x_2,x_3)\in [0,1)^2$ with points $(0,x_2',x_3')$ with $(x_2',x_3')\in [0,1)^2$: we have
\begin{equation*}
(1,x_2,x_3)\sim\left\{\begin{array}{ll}
(0,x_2,x_3-x_2) &\quad \textrm{for } x_3-x_2\in [0,1)\\
(0,x_2,x_3-x_2+1) &\quad \textrm{for } x_3-x_2\in [-1,0);
\end{array}\right.
\end{equation*}
actually, for $x_3-x_2\in [0,1)$ there holds $(-1,0,0)\oplus (1,x_2,x_3)= (0,x_2,x_3-x_2)$ while for $x_3-x_2\in [-1,0)$ there holds $(-1,0,1)\oplus (1,x_2,x_3)= (0,x_2,x_3-x_2+1)$. Moreover, $(1,1,x_3)\sim (0,0,x_3)$ because $(-1,-1,0)\oplus (1,1,x_3)=(0,0,x_3)$ for every $x_3\in[0,1)$; $(1,x_2,1)\sim (0,x_2,1-x_2)$ because $(-1,0,0)\oplus (1,x_2,1)=(0,x_2,1-x_2)$ for every $x_2\in[0,1)$ and $(1,1,1)\sim(0,0,0)$ because $(-1,-1,-1)\oplus(1,1,1)=(0,0,0)$. Similarly for the remaining cases.
\end{remark}
\begin{remark}\label{rmk:misureper}
With a slight abuse of notations, throughout this paper we shall identify any measure $\eta\in{\mathcal M}(Q_\cH)$ with the same measure on the torus~$\Te$ and also with the measure $\eta'\in{\mathcal M}(\He^1)$ such that $\eta'(n\oplus A)=\eta (A)$ for any measurable set $A\subset Q_\cH$ and $n\in\Z^3$.
\end{remark}

\subsection{Convolution on Heisenberg group}\label{sec:conv}
We define the convolution of a function $\psi\in L^1_{loc}(\He^1)$ by a function $\rho\in C^{\infty}_c(\He^1)$ as
\begin{equation}\label{conH}
(\psi \ast\rho)(x)= \int_{\He^1} \psi(y)\rho(x\oplus y^{-1}) dy=  \int_{\He^1} \psi(y^{-1}\oplus x)\rho(y) dy.
\end{equation}
We will use the convolution by the regularizing kernel 
\begin{equation}\label{rhoeps}
\rho_{\varepsilon}(x)= C(\varepsilon) \rho_0(\|\delta_{1/\varepsilon}(x)\|^4_\cH )
\end{equation}
where $\rho_0(t)=e^{-t}$ and the constant $C(\varepsilon)$ is chosen such that 
$\int_{\He^1}\rho_{\varepsilon}(x)dx=1$. 
For this convolution the following proposition holds true.
\begin{proposition}\label{propconvH}
We have
\begin{enumerate}
\item [(i)] $\psi \ast\rho_{\varepsilon}=\rho_{\varepsilon}\ast\psi$;
\item [(ii)] If $\psi$ is $Q_\cH$-periodic then also $\psi \ast\rho_{\varepsilon}$ is $Q_\cH$-periodic;
\item [(iii)] If $\psi$ is $L^p(\He^1)$ for some $p\geq 1$, then  $\psi \ast\rho_{\varepsilon}$ is $C^{\infty}(\He^1)$;
\item [(iv)] If $\psi$ is $L^1_{loc}(\He^1)$ then $\psi\ast\rho_{\varepsilon}\to \psi$ in $L^1_{loc}(\He^1)$ as $\varepsilon\to 0$;
\item [(v)]  If $\psi$ is differentiable then $X_i\psi \ast\rho_{\epsilon}=(X_i\psi) \ast\rho_{\epsilon}$; 
\item[(vi)] If $\psi\geq 0$ in $\He^1$ and $\int_{\He^1}\psi(x)dx=C>0$ then $\psi \ast\rho_{\varepsilon}(x)>0$ for any $x\in\He^1$.
\end{enumerate}
\end{proposition}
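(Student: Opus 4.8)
The plan is to read Proposition~\ref{propconvH} as the list of standard mollifier properties transplanted to the non-commutative, unimodular setting of $\He^1$. Throughout I would keep both representations in \eqref{conH} at hand and use repeatedly that Lebesgue measure is the bi-invariant Haar measure of $\He^1$ and is preserved by the inversion $x\mapsto x^{-1}$ and by the shear conjugations $w\mapsto n\oplus w\oplus n^{-1}$ (all of Jacobian $1$). I would record at the start the two structural features of the kernel visible from \eqref{rhoeps} and \eqref{norm}: the evenness $\rho_\varepsilon(x^{-1})=\rho_\varepsilon(x)$ (since $\|\cdot\|_\cH$ only sees $x_1^2+x_2^2$ and $x_3^2$, and is moreover rotation invariant in $(x_1,x_2)$), and the dilation scaling $\rho_\varepsilon(x)=\varepsilon^{-4}\rho_1\!\big(\delta_{1/\varepsilon}(x)\big)$, where $4$ is the homogeneous dimension of $\He^1$ and $\rho_1$ is $\rho_\varepsilon|_{\varepsilon=1}$. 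For the commutativity (i) I would write both $\psi\ast\rho_\varepsilon$ and $\rho_\varepsilon\ast\psi$ through \eqref{conH}, perform the inversion-preserving change of variable $z=x\oplus y^{-1}$ (Jacobian $1$ by unimodularity), and match the two integrands using the evenness $\rho_\varepsilon(w)=\rho_\varepsilon(w^{-1})$; this reduces (i) to the symmetry of the radial kernel.

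The smoothness in (iii) and the commutation in (v) I would handle together by differentiating under the integral sign. For (iii) I use the representation $\psi\ast\rho_\varepsilon(x)=\int\psi(y)\rho_\varepsilon(x\oplus y^{-1})\,dy$, where the $x$-dependence sits only inside $\rho_\varepsilon$; by the chain rule every $x$-derivative of $x\mapsto\rho_\varepsilon(x\oplus y^{-1})$ is a polynomial in $(x,y)$ times a derivative of $\rho_\varepsilon$ evaluated at $x\oplus y^{-1}$, and all such derivatives inherit the Gaussian-type decay of \eqref{rhoeps}. Hence, for $x$ in a compact set, they are dominated by an $L^{p'}$ function of $y$, and Hölder's inequality against $\psi\in L^p$ legitimises the differentiation, giving $\psi\ast\rho_\varepsilon\in C^\infty(\He^1)$. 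For (v) I would instead use the second representation $\psi\ast\rho_\varepsilon(x)=\int\psi(y^{-1}\oplus x)\rho_\varepsilon(y)\,dy$: here $X_i$ in $x$ hits only the factor $\psi(y^{-1}\oplus x)$, and by the left-invariance \eqref{LIVF} one has $X_i^x\big[\psi(y^{-1}\oplus x)\big]=(X_i\psi)(y^{-1}\oplus x)$, so that $X_i(\psi\ast\rho_\varepsilon)=(X_i\psi)\ast\rho_\varepsilon$. This is precisely why the convolution is written with $\psi$ evaluated at the left translate $y^{-1}\oplus x$. Property (iv) is the approximate-identity statement: writing $(\psi\ast\rho_\varepsilon)(x)-\psi(x)=\int[\psi(y^{-1}\oplus x)-\psi(x)]\rho_\varepsilon(y)\,dy$ (using $\int\rho_\varepsilon=1$) and integrating over a compact set, the scaling $\rho_\varepsilon(y)=\varepsilon^{-4}\rho_1(\delta_{1/\varepsilon}y)$ shows the mass of $\rho_\varepsilon$ concentrates at the origin as $\varepsilon\to0$, so the claim reduces to continuity of the translations $\tau\mapsto\psi(\tau\oplus\cdot)$ in $L^1_{loc}$, which I would prove by density of continuous functions and uniform continuity on compacts. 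Finally (vi) is immediate: by \eqref{rhoeps} one has $\rho_\varepsilon>0$ everywhere, so if $\psi\ge0$ with $\int\psi>0$ the integrand in $\int\psi(y)\rho_\varepsilon(x\oplus y^{-1})\,dy$ is nonnegative and strictly positive on a set of positive measure, forcing the value to be strictly positive at every $x$.

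The periodicity statement (ii) is, in my view, the main obstacle, exactly because $\oplus$ is non-commutative: by Lemma~\ref{lemma:period}, $Q_\cH$-periodicity is invariance under \emph{left} lattice translations, while the convolution \eqref{conH} is naturally designed (as (v) shows) to intertwine with left translations on the argument of $\psi$ sitting on the opposite side. My plan is to fix $n\in\Z^3$, compute $(\psi\ast\rho_\varepsilon)(n\oplus x)$ via the representation $\int\psi(y^{-1}\oplus n\oplus x)\rho_\varepsilon(y)\,dy$, and use the substitution $y\mapsto n\oplus y$ (Haar-invariance) together with the $1_\cH$-periodicity $\psi(n\oplus\cdot)=\psi(\cdot)$ to strip the lattice factor from the argument of $\psi$. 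The residual discrepancy is a conjugation $w\mapsto n\oplus w\oplus n^{-1}$ acting on the argument of $\rho_\varepsilon$, and closing the argument amounts to absorbing this shear through the symmetries of the radial kernel $\rho_\varepsilon$ and the pavage property of the reference tiling. This step, where the sub-Riemannian Heisenberg structure genuinely enters, is the delicate point and the one I would expect to require the most care; the remaining items are routine adaptations of the Euclidean mollifier theory.
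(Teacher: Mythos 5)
Your treatment of (iii), (iv) and (vi) is correct and routine, and your proof of (v) --- differentiate the representation $\int\psi(y^{-1}\oplus x)\rho_\varepsilon(y)\,dy$ under the integral and invoke the left-invariance \eqref{LIVF} --- is exactly the one remark the paper itself makes (everything else is omitted there as ``standard'', so there is no detailed argument to compare against). The genuine problems are in (i) and (ii): in both cases the step you rely on, or defer, is not merely delicate, it fails, and it cannot be repaired as long as the convolution is the one literally written in \eqref{conH}.

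For (i): after your change of variables $z=x\oplus y^{-1}$ one has $(\rho_\varepsilon\ast\psi)(x)=\int\psi(z)\,\rho_\varepsilon(z^{-1}\oplus x)\,dz$, while $(\psi\ast\rho_\varepsilon)(x)=\int\psi(z)\,\rho_\varepsilon(x\oplus z^{-1})\,dz$; since $\psi$ is arbitrary, (i) would force $\rho_\varepsilon(z^{-1}\oplus x)=\rho_\varepsilon(x\oplus z^{-1})$. Evenness does not give this: $\rho_\varepsilon(w)=\rho_\varepsilon(w^{-1})$ yields $\rho_\varepsilon(x\oplus z^{-1})=\rho_\varepsilon(z\oplus x^{-1})$, i.e.\ it reverses the order \emph{and} inverts both factors. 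What you actually need is invariance of the gauge under commuting the two factors, and that is false: for $a=(1,0,0)$, $b=(0,1,1)$ one computes $a\oplus b=(1,1,2)$ and $b\oplus a=(1,1,0)$, so $\|a\oplus b\|^4_\cH=8\neq 4=\|b\oplus a\|^4_\cH$. Taking $\psi$ a bump concentrated near $(0,-1,-1)$ then gives $(\psi\ast\rho_\varepsilon)\neq(\rho_\varepsilon\ast\psi)$ at the point $(1,0,0)$. Radiality of $\rho_\varepsilon$ in $\|\cdot\|_\cH$ is a rotation/reflection invariance, not conjugation invariance; on a noncommutative group it makes radial kernels commute with \emph{each other} (the Gelfand-pair phenomenon), not with an arbitrary $\psi$. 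So your reduction cannot close (i).

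For (ii): your substitution $y\mapsto n\oplus y$ in the second form of \eqref{conH} gives $(\psi\ast\rho_\varepsilon)(n\oplus x)=\int\psi(y^{-1}\oplus x)\,\rho_\varepsilon(n\oplus y)\,dy$, so closing the argument means showing that the residual left factor $n$ on the kernel is invisible when integrated against the (right-periodic) function $y\mapsto\psi(y^{-1}\oplus x)$. Decomposing $\He^1$ into right cosets of $\Z^3$ (as in the proof of Lemma~\ref{8110nostro}), this is equivalent to left-periodicity of the right periodization $P(w):=\sum_{m\in\Z^3}\rho_\varepsilon(w\oplus m)$, and that is \emph{false}: for $w=(0,\tfrac14,0)$ and $n=(1,0,0)$ the coset $w\oplus\Z^3$ contains $w$ itself, with $\|w\|^4_\cH=\tfrac1{256}$, whereas every point $p=(1+m_1,\tfrac14+m_2,\tfrac{1-m_1}{4}+m_2+m_3)$ of $(n\oplus w)\oplus\Z^3$ has $\|p\|^4_\cH\geq\tfrac1{16}$ (its vertical entry can vanish only if $m_1\equiv 1\pmod 4$, which forces $(1+m_1)^2\geq 4$), whence $P(n\oplus w)/P(w)\to 0$ as $\varepsilon\to 0$. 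So no symmetry of the kernel and no use of the pavage can absorb the shear you identified. What \emph{is} true --- and what your substitution proves in two lines --- is (ii) for the opposite pairing $\int\psi(y)\rho_\varepsilon(y^{-1}\oplus x)\,dy$, where $y\mapsto n\oplus y$ hits $\psi$ and leaves the kernel untouched. In short: with \eqref{conH} as written, (v) holds but (ii) breaks; with the opposite convention, (ii) is immediate but (v) must be re-derived (the derivative then falls on the kernel); and the two conventions would be reconciled precisely by (i), which fails. A coherent write-up must therefore fix the convention with the kernel at $y^{-1}\oplus x$ for the periodicity statement (this is also the only reading under which Lemma~\ref{nostro819} can legitimately invoke periodicity of $m\ast\rho_\varepsilon$) and give up the reduction you propose; the plan as you formulated it --- keep \eqref{conH} and absorb the discrepancy through kernel symmetries --- cannot be completed.
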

The proof is standard and relies on the fact that the Haar measure associated with~$\He^1$ coincides with the Lebesgue one (see \cite[Proposition 1.3.21]{BLU}); hence, we shall omit it. We only note that, to prove $(v)$, we use the left invariance of the vector fields $X_i$.

\section{A probabilistic representation for the continuity equation}\label{probsect}

This section contains the main result of this paper, a probabilistic representation of the $Q_\cH$-periodic solution of equation~\eqref{cont}. We adapt the techniques introduced in~\cite[Theorem 8.2.1]{AGS} finding a measure concentrated on the solutions of~\eqref{ODE}. Note that~\cite[Theorem 8.2.1]{AGS} does not apply directly to our case because the global summability assumption \cite[equation (8.1.21)]{AGS} does not hold for the drift $vB^T$.\\
We need some assumptions and definitions. We assume that the set~${\mathcal P}(\Te)$ is endowed with the Kantorovich-Rubinstein distance~${\bf d_{1}}$ (see \cite{BK}):
$$
{\bf d_{1}}(m, m'):= \inf_{\pi\in\Pi(m,m')}\int_{\Te\times \Te}d_{\Te}(x,y)d\pi(x,y)\qquad \forall m,m'\in {\mathcal P}(\Te)
$$
where 
\begin{equation}\label{Pi}
\Pi(m, m'):=\{\pi\in{\mathcal P}(\Te\times \Te): \pi(A\times \Te)=m(A), \pi(\Te \times A)=m'(A)\},
\end{equation}
where $A$ is any Borel set $A\subset \Te$.
We set
\begin{equation}\label{eq:Pper}
{\mathcal P}_{per}(\He^1):=\left\{m\in {\mathcal M}(\He^1):\ m_{\mid Q_\cH}\in {\mathcal P} (Q_\cH),\quad\textrm{$m$ is $Q_\cH$-periodic}\right\}
\end{equation}
where for ``$m$ is $Q_\cH$-periodic'' we mean $m(n\oplus A)=m(A)$ for every $n\in \Z^3$ and every measurable $A\subset Q_\cH$.
By Remark~\ref{rmk:misureper}, we identify ${\mathcal P}_{per}(\He^1)$ with ${\mathcal P}(\Te)$. In particular, by this identification, we assume that also ${\mathcal P}_{per}(\He^1)$  is endowed with the Kantorovich-Rubinstein distance~${\bf d_{1}}$.\\
Let $\Gamma:= AC([0,T], \He^1)$. For each $t\in[0,T]$, the {\it evaluation map} is the map $e_t:\Te\times \Gamma\rightarrow \Te$ with $e_t(x,\gamma)=\gamma(t)$.\\
We can now state our assumptions and our main result whose proof is postponed at the end of this Section.\\
\noindent {\bf {Assumptions $(H)$}}\\
$m_{0} \in$ $\mathcal{P}_{per}(\He^1)$; let $v:\He^1\times[0,T]\to \re^2$, with $v=v(x,t)$, be measurable, bounded, $Q_\cH$-periodic with respect to~$x$ and $v(\cdot, t)$ is Borel for every $t\in[0,T]$.
\begin{theorem}\label{821}
Let $m:[0, T] \rightarrow \mathcal{P}(\Te)$ be a narrowly continuous solution of problem~\eqref{cont}. Under Assumptions $(H)$, there exists a probability measure $\eta$ in $\Te\times \Gamma$, 
 such that\\
(i) $\eta$ is concentrated on the set of pairs $(x, \gamma)$ such that $\gamma \in \Gamma$ solves~\eqref{ODE} with $s=0$.\\
(ii) $m_t=m^{\eta}_t:=e_{t}{\#} \eta$ for any $t \in[0, T]$, namely:
\begin{equation}\label{821nostra}
\int_{\Te} \varphi d m^{\eta}_t:=\int_{\Te \times \Gamma} \varphi(\gamma(t)) d \eta(x, \gamma) \quad \forall \varphi \in C^{0}(\Te), t \in[0, T].
\end{equation}
Conversely, any $\eta$ satisfying (i) induces via \eqref{821nostra} a solution of~\eqref{cont} with $m_{0}=e_0 \# \eta$.
\end{theorem}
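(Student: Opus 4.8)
The plan is to mirror the classical superposition argument of \cite[Theorem 8.2.1]{AGS}, but to overcome the failure of the global summability hypothesis \cite[equation (8.1.21)]{AGS} by exploiting the periodicity and the pavage structure of $\He^1$. The central difficulty is precisely that $vB^T$ is not globally summable against $m_t$ over all of $\He^1$: the matrix $B(x)$ grows linearly in $x_1,x_2$ (hence the drift $vB^T$ is unbounded on $\He^1$), and $m_t$, being only $Q_\cH$-periodic, has infinite total mass on $\He^1$. The resolution I would adopt is to work on the torus $\Te$, where $m_t\in\mathcal P(\Te)$ is a genuine probability measure and the drift, being bounded and $Q_\cH$-periodic, is controlled; the technical content of transferring estimates between $\He^1$ and $\Te$ is exactly what Lemma~\ref{8110nostro} and Lemma~\ref{nostro819} (invoked in the introduction) are designed to provide.

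First I would set up the regularization: convolve the drift and the measure with the Heisenberg kernel $\rho_\varepsilon$ from \eqref{rhoeps}, using Proposition~\ref{propconvH} to guarantee that periodicity is preserved (item (ii)), that the regularized objects are smooth (item (iii)), and that the vector-field derivatives commute with the convolution (item (v)). For the smooth, periodic, bounded drift $v^\varepsilon:=v\ast\rho_\varepsilon$ the characteristic ODE \eqref{ODE} has a well-defined flow $Y^\varepsilon$, and the regularized density $m^\varepsilon_t$ is transported along this flow, so that $m^\varepsilon_t=(Y^\varepsilon_{0,t})\#m^\varepsilon_0$. This yields, for each $\varepsilon$, a measure $\eta^\varepsilon:=(\mathrm{id},Y^\varepsilon_{0,\cdot})\#m^\varepsilon_0$ on $\Te\times\Gamma$ that is concentrated on exact solutions of the regularized ODE and satisfies $e_t\#\eta^\varepsilon=m^\varepsilon_t$.

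Next I would establish tightness of the family $\{\eta^\varepsilon\}$ in $\mathcal P(\Te\times\Gamma)$ and pass to a weak limit $\eta$ along a subsequence. Tightness on the $\Te$-factor is automatic since $\Te$ is compact; tightness on $\Gamma=AC([0,T],\He^1)$ follows from a uniform equicontinuity estimate on the curves $\gamma=Y^\varepsilon(\cdot)$, which comes from the uniform bound $\|v\|_\infty$ together with control of $B^T(Y^\varepsilon(t))$ along trajectories — and here again the periodic/pavage estimates of Lemma~\ref{8110nostro} and Lemma~\ref{nostro819} are the key, because one must bound the Euclidean growth of $B^T$ in terms of the intrinsic Heisenberg distance, which stays bounded modulo $\Z^3$. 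Having a limit $\eta$, I would verify (i), that $\eta$ is concentrated on solutions of the limiting ODE \eqref{ODE} with $s=0$: this is the passage to the limit in the integral form $\gamma(t)=\gamma(0)+\int_0^t v^\varepsilon(\gamma(\tau),\tau)B^T(\gamma(\tau))\,d\tau$, using the narrow convergence $v^\varepsilon\to v$ together with a standard lower-semicontinuity/weak-closedness argument for the set of ODE solutions. Property (ii), $m_t=e_t\#\eta$, follows by testing \eqref{821nostra} against $\varphi\in C^0(\Te)$ and passing to the limit, using $e_t\#\eta^\varepsilon=m^\varepsilon_t\to m_t$ narrowly.

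The main obstacle, I expect, is the two directions of the equivalence handled together with the regularity mismatch. The \emph{converse} implication — that any $\eta$ concentrated on solutions of \eqref{ODE} induces through \eqref{821nostra} a weak solution of \eqref{cont} — is the more routine half: one differentiates $t\mapsto\int_\Te\varphi(\gamma(t))\,d\eta$ for $\varphi\in C^\infty_{Q_\cH,per}$, uses the chain rule $\frac{d}{dt}\varphi(\gamma(t))=D\varphi(\gamma(t))\cdot v(\gamma(t),t)B^T(\gamma(t))=D_\cH\varphi\cdot v$, and recovers the weak formulation of the continuity equation by integrating in $t$ and invoking $D_\cH\varphi=D\varphi\,B$. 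The genuinely delicate point is justifying that all these manipulations make sense \emph{intrinsically on $\Te$}: one must check that the noncommutative identification of faces described in the Remark following Definition~\ref{Htoro} (in particular the nontrivial gluing between $x_3=0$ and $x_3=1$) is respected by the flow, so that trajectories and the push-forward are well-defined on the Heisenberg torus rather than the Euclidean one. I would isolate this compatibility as the crux and lean on Lemma~\ref{lemma:period} and the pavage property to confirm that $Q_\cH$-periodicity of $v$ makes the ODE \eqref{ODE} project consistently to $\Te$.
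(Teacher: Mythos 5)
There is a genuine gap at the foundation of your construction: you set $v^\varepsilon:=v\ast\rho_\varepsilon$ and then claim that $m^\varepsilon:=m\ast\rho_\varepsilon$ is transported by the flow of $v^\varepsilon$, i.e.\ $m^\varepsilon_t=(Y^\varepsilon_{0,t})\#m^\varepsilon_0$. This fails because the continuity equation involves the \emph{product} $vm$, and convolution does not commute with products: convolving the equation (and using Proposition~\ref{propconvH}-$(v)$) gives $\partial_t m^\varepsilon=\diver_{\cH}\bigl((v\,m)\ast\rho_\varepsilon\bigr)$, and in general $(v\,m)\ast\rho_\varepsilon\neq (v\ast\rho_\varepsilon)(m\ast\rho_\varepsilon)$. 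Hence $m^\varepsilon$ does \emph{not} solve the continuity equation with drift $v\ast\rho_\varepsilon$, the representation $m^\varepsilon_t=Y^\varepsilon(t)\#m^\varepsilon_0$ is unavailable, and the whole family $\eta^\varepsilon$ you build is not concentrated on curves compatible with $m^\varepsilon$. The correct regularization — this is precisely the content of Lemma~\ref{nostro819}, following the AGS scheme — is to mollify the momentum and the density separately, $E^\varepsilon:=(v\,m)\ast\rho_\varepsilon$, $m^\varepsilon:=m\ast\rho_\varepsilon$, and to define the velocity as the ratio $v^\varepsilon:=E^\varepsilon/m^\varepsilon$, which is well defined because the strictly positive kernel~\eqref{rhoeps} makes $m^\varepsilon>0$, and is still uniformly bounded since $|E^\varepsilon|\leq\|v\|_\infty\, m^\varepsilon$. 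Then $\diver_{\cH}(E^\varepsilon)=\diver_{\cH}(v^\varepsilon m^\varepsilon)$ holds by definition, $m^\varepsilon$ solves~\eqref{8128nostra}, and Proposition~\ref{818} gives the push-forward representation you need.

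A secondary but related misreading: you invoke Lemma~\ref{8110nostro} and Lemma~\ref{nostro819} as the tools that ``bound the Euclidean growth of $B^T$ along trajectories'' for tightness. That is not their role. Equicontinuity of the curves is elementary from the structure of the ODE (as in Lemma~\ref{8.1.4}): $Y_1,Y_2$ are Lipschitz with constant $\|v\|_\infty$, hence bounded on $[0,T]$, and then $Y_3'=-Y_2v_1^\varepsilon+Y_1v_2^\varepsilon$ is bounded too. Lemma~\ref{8110nostro} is instead a Jensen-type inequality on the torus, exploiting the pavage $\He^1=\cup_{n\in\Z^3}\,\Te\oplus(n\oplus z)^{-1}$, whose purpose is the uniform $L^p$ bound~\eqref{8126nostra} and the convergence~\eqref{8127nostra}; these, together with the narrow convergence $E^\varepsilon_t\to v_t\,m_t$ (note: one cannot speak of ``narrow convergence $v^\varepsilon\to v$'' for a merely bounded Borel $v$), are what justify passing to the limit in the integral identity and obtaining the superposition principle~\eqref{superA}, from which (i) follows after disintegrating $\eta$ with respect to $m_0$. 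Your outline of the converse implication and of the projection to $\Te$ via Lemma~\ref{lemma:period} is fine, but without the correct definition of $v^\varepsilon$ the direct implication does not get off the ground.
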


We recall that a ($Q_\cH$-periodic) function $m$ is a distributional solution of \eqref{cont} if
\begin{equation}\label{813nostra}
\int_0^T\int_{\mathbb{H}^{1}}(\partial_t\varphi- v\,\cdot D_{\cH}\varphi)m(x,t)dx dt=0\qquad \forall \varphi\in C_{c}^{\infty}(\mathbb{H}^{1}\times (0,T)).
\end{equation}
Choosing $\varphi(t,x)=\eta(t)\zeta(x)$ with $\eta\in C_c^{\infty}(0,T)$ and $\zeta\in C_{c}^{\infty}(\mathbb{H}^{1})$, by density, we get the following equivalent formulation of  \eqref{813nostra}: in the sense of distribution in $(0,T)$ there holds
\begin{equation}\label{814nostra}
\frac{d}{dt} \int_{\mathbb{H}^{1}}\zeta(x)m(x,t)dx=-\int_{\mathbb{H}^{1}}v\cdot D_{\cH}\zeta(x)m(x,t)dx.
\end{equation}
Note that, by periodicity, $m$ is a solution of \eqref{cont} in the sense of distributions in $(0,T)$ also over $\Te$, i.e. \eqref{814nostra} holds also over $\Te$:
\begin{equation}\label{814nostraQH}
\frac{d}{dt} \int_{\Te}\zeta(x)m(x,t)dx=-\int_{\Te}v\cdot D_{\cH}\zeta(x)m(x,t)dx,\qquad  \forall \zeta\in C^{\infty}(\Te).
\end{equation}
The following Lemma ensures that any $Q_\cH$-periodic distributional solution to~\eqref{cont} (i.e. satisfying \eqref{814nostraQH}) has a representative in $C([0,T],{\mathcal P}_{per}(\He^1))$ which will be still called $m$.
\begin{lemma}\label{8.1.2}
Let $m:\He^1\times[0,T]\to \re$, with $m=m(x,t)$, be a measurable function satisfying \eqref{814nostraQH} such that, for any $t\in[0,T]$, $m(\cdot, t)$ is the density of a Borel $Q_\cH$-periodic probability measure. 
Then there exists a narrowly continuous curve $t \in[0, T] \mapsto \tilde{m}(x,t) \in \mathcal{P}\left(\Te\right)$ such that $m(\cdot, t)=\tilde{m}(\cdot, t)$ for a.e. $t \in(0, T).$
\end{lemma}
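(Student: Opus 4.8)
The plan is to follow the strategy of \cite[Lemma 8.1.2]{AGS}, with the simplification that here the base space $\Te$ is compact, so that $\mathcal{P}(\Te)$ is automatically narrowly compact and no tightness argument is needed. First I would fix $\zeta\in C^\infty(\Te)$ and set $g_\zeta(t):=\int_\Te\zeta(x)\,m(x,t)\,dx$, which is well defined for every $t\in[0,T]$ since $m(\cdot,t)$ is a probability density on $\Te$. Because $X_1,X_2$ are left-invariant they pass to the quotient $\Te$, and the compactness of $\Te$ gives $\|D_\cH\zeta\|_{\infty}<\infty$; together with the boundedness and $Q_\cH$-periodicity of $v$ this yields $\big|\int_\Te v\cdot D_\cH\zeta\,m(x,t)\,dx\big|\le\|v\|_{\infty}\|D_\cH\zeta\|_{\infty}$ for every $t$. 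By \eqref{814nostraQH} the distributional derivative of $g_\zeta$ on $(0,T)$ equals $t\mapsto-\int_\Te v\cdot D_\cH\zeta\,m(x,t)\,dx$, which lies in $L^\infty(0,T)$; hence $g_\zeta$ coincides almost everywhere with a Lipschitz function $\tilde g_\zeta$ on $[0,T]$.

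Next I would fix a countable family $\{\zeta_k\}_{k\in\N}\subset C^\infty(\Te)$ dense in $C(\Te)$ for the uniform norm, whose existence follows from the separability of $C(\Te)$ (recall $\Te$ is a compact metric space) and the density therein of smooth $Q_\cH$-periodic functions, obtained e.g.\ by the mollification of Proposition~\ref{propconvH}, which preserves $Q_\cH$-periodicity and produces smooth functions. Setting $N:=\bigcup_k\{t:\,g_{\zeta_k}(t)\neq\tilde g_{\zeta_k}(t)\}$, a Lebesgue-null subset of $(0,T)$, one has $\int_\Te\zeta_k\,dm_t=\tilde g_{\zeta_k}(t)$ for all $k$ and all $t\notin N$. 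To define the representative on the whole interval, for an arbitrary $t\in[0,T]$ I would pick $t_n\to t$ with $t_n\notin N$: the sequence $(m_{t_n})$ lies in the narrowly compact set $\mathcal{P}(\Te)$, and any narrow limit point $\mu$ satisfies $\int_\Te\zeta_k\,d\mu=\lim_n\tilde g_{\zeta_k}(t_n)=\tilde g_{\zeta_k}(t)$ for every $k$, by continuity of $\tilde g_{\zeta_k}$. Since $\{\zeta_k\}$ is dense and all the measures have unit mass, the numbers $\int_\Te\zeta_k\,d\mu$ determine $\mu$ uniquely, so the whole sequence converges to a limit $\tilde m_t$ independent of the chosen sequence, characterized by $\int_\Te\zeta_k\,d\tilde m_t=\tilde g_{\zeta_k}(t)$ for all $k$.

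It then remains to check narrow continuity and the a.e.\ identification. If $s_n\to s$ then $\int_\Te\zeta_k\,d\tilde m_{s_n}=\tilde g_{\zeta_k}(s_n)\to\tilde g_{\zeta_k}(s)=\int_\Te\zeta_k\,d\tilde m_s$ for each $k$; a standard three-term estimate using the density of $\{\zeta_k\}$ and the uniform unit mass upgrades this to $\int_\Te\zeta\,d\tilde m_{s_n}\to\int_\Te\zeta\,d\tilde m_s$ for every $\zeta\in C(\Te)$, i.e.\ ${\bf d_1}(\tilde m_{s_n},\tilde m_s)\to0$, so that $\tilde m\in C([0,T],\mathcal{P}(\Te))$. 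Finally, for $t\notin N$ one has $\int_\Te\zeta_k\,dm_t=\tilde g_{\zeta_k}(t)=\int_\Te\zeta_k\,d\tilde m_t$ for all $k$, whence $m_t=\tilde m_t$ by density; this gives $m(\cdot,t)=\tilde m(\cdot,t)$ for a.e.\ $t\in(0,T)$, as claimed.

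I expect the main obstacle to be the two points where the Heisenberg-periodic structure genuinely enters. The first is the boundedness $\|D_\cH\zeta\|_{\infty}<\infty$ for periodic smooth $\zeta$, needed to ensure that the right-hand side of \eqref{814nostraQH} belongs to $L^\infty(0,T)$; this rests on the left-invariance of $X_1,X_2$ and the compactness of $\Te$. The second is the promotion of convergence tested against the countable family $\{\zeta_k\}$ to genuine narrow convergence, which crucially exploits that $\Te$ is compact, so that no mass can escape, and that ${\bf d_1}$ metrizes narrow convergence on $\mathcal{P}(\Te)$.
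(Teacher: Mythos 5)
Your proposal is correct and follows essentially the same route as the paper: the paper's proof simply invokes \cite[Lemma 8.1.2]{AGS} with the Euclidean gradient $D\zeta$ replaced by $D_{\cH}\zeta$ for $\zeta\in C^{\infty}(\Te)$, noting that boundedness of $v$ and compactness of $\Te$ make tightness automatic. Your write-up is precisely that argument spelled out in detail (Lipschitz representative of $t\mapsto\int_{\Te}\zeta\,m\,dx$, countable dense family, compactness of $\mathcal{P}(\Te)$), so it matches the paper's intended proof.
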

\begin{proof}
We follow the proof of \cite[lemma 8.1.2]{AGS} replacing $D\zeta$ with $D_{\cH}\zeta$, where $\zeta\in C^{\infty}(\Te)$; since $m$ is a measure on~$\Te$, $v$ is bounded and $\Te$ is compact, we get the tightness of the family $m$.
\end{proof}
Now we want to obtain an explicit solution of \eqref{cont} under an additional regularity assumption for~$v$. 
More precisely, let $\operatorname{Lip}\left(\overline v, K\right)$ be the Lipschitz constant w.r.t. $x$ of $\overline v(x,t)$ in the set $K$.
When the drift $ v$ in equation \eqref{cont} satisfies 
\begin{equation}\label{818nostra}
\int_{0}^{T}\operatorname{Lip}\left( v, K\right)d t<\infty\qquad\forall K\subset \He^1,\, \textrm{compact},
\end{equation}
we can obtain an explicit solution of \eqref{cont} by the classical method of characteristics (see Proposition~\ref{818} below). We approximate $v$ and $m$ with $v^{\varepsilon}$ and $m^{\varepsilon}$ by means of the convolution with a family of mollifiers as in~\eqref{conH}-\eqref{rhoeps}. The~$v^{\varepsilon}$ satisfy~\eqref{818nostra} and $m^{\varepsilon}$ solves~\eqref{cont} with drift $v^{\varepsilon}$. Hence, we get a representation formula for $m^{\varepsilon}$. 
Lemma \ref{nostro819} is the key result to get this representation formula.
To prove it we strongly use the properties of the distance associated with the Heisenberg group and of the pavage representing $\mathbb{H}^{1}$.
\noindent To prove Lemma \ref{nostro819} we need this technical Lemma.
\begin{lemma}\label{8110nostro}
Let $m \in \mathcal{P}(\Te)$, $E\in L^{\infty}(\Te)$ absolutely continuous with respect to $m$, $\rho\in C^{\infty}_c(\He^1)$ strictly positive. 
Then, for any $p \geq 1$
$$
\int_{\Te}\left|\frac{E * \rho}{m * \rho}\right|^{p} m * \rho\, d x \leq \int_{\Te}\left|\frac{E}{m}\right|^{p} d m.
$$
\end{lemma}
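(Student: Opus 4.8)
The plan is to localize the inequality at each point $x$, reduce it there to Jensen's inequality for the convex map $z\mapsto|z|^p$, and then integrate in $x$ using an ``unfolding'' identity for a periodized kernel. Throughout I write $u:=E/m$ for the Radon--Nikodym density, so that $dE=u\,dm$; since $E$ and $m$ are $Q_\cH$-periodic, so is $u$. If $\int_{\Te}|u|^p\,dm=+\infty$ there is nothing to prove, hence I may assume this quantity is finite.

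First I would rewrite both convolutions in terms of the periodized kernel $R(z):=\sum_{n\in\Z^3}\rho(z\oplus n^{-1})$. Folding the periodic measure onto the fundamental domain $Q_\cH$ (using $m(n\oplus A)=m(A)$ and $u(n\oplus q)=u(q)$, together with $(n\oplus q)^{-1}=q^{-1}\oplus n^{-1}$) gives, for every $x$,
\[
(m*\rho)(x)=\int_{Q_\cH}R(x\oplus q^{-1})\,dm(q),\qquad (E*\rho)(x)=\int_{Q_\cH}R(x\oplus q^{-1})\,u(q)\,dm(q).
\]
Since $\rho>0$ (whence $R>0$), Proposition~\ref{propconvH} gives $(m*\rho)(x)>0$ for every $x$, so I can define the probability measure $d\nu_x(q):=\frac{R(x\oplus q^{-1})}{(m*\rho)(x)}\,dm(q)$ on $Q_\cH$. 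With this notation $\frac{(E*\rho)(x)}{(m*\rho)(x)}=\int_{Q_\cH}u\,d\nu_x$, and Jensen's inequality for the convex function $z\mapsto|z|^p$ yields the pointwise bound
\[
\left|\frac{(E*\rho)(x)}{(m*\rho)(x)}\right|^p(m*\rho)(x)\le\int_{Q_\cH}R(x\oplus q^{-1})\,|u(q)|^p\,dm(q).
\]

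Then I would integrate this over $x\in Q_\cH$ (the integrand on the left is $Q_\cH$-periodic, so $\int_{\Te}=\int_{Q_\cH}$) and apply Tonelli's theorem, all integrands being nonnegative, to obtain
\[
\int_{\Te}\left|\frac{E*\rho}{m*\rho}\right|^p(m*\rho)\,dx\le\int_{Q_\cH}|u(q)|^p\left(\int_{Q_\cH}R(x\oplus q^{-1})\,dx\right)dm(q).
\]
The whole argument then rests on showing that the inner integral equals $1$. After the measure-preserving change of variable $z=x\oplus q^{-1}$ it becomes $\int_{Q_\cH\oplus q^{-1}}R\,dz$, and unfolding the definition of $R$ (with $n\mapsto n^{-1}$ a bijection of $\Z^3$) gives $\int_{Q_\cH\oplus q^{-1}}R\,dz=\int_{\He^1}\rho=1$, provided $Q_\cH\oplus q^{-1}$ is a fundamental domain for the right $\Z^3$-action.

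This last point is the main obstacle, and it is exactly where the structure of the pavage of $\He^1$ enters: because $\oplus$ is noncommutative, $Q_\cH\oplus q^{-1}$ is a \emph{right} translate of $Q_\cH$, so one must check that its right $\Z^3$-translates $\{Q_\cH\oplus(q^{-1}\oplus k)\}_{k\in\Z^3}$ still tile $\He^1$. I would verify this directly from the explicit cell $Q_\cH=[0,1)^3$ and the group law~\eqref{Group_Law}: for $d=q^{-1}\oplus k$ the first two coordinates of $Q_\cH\oplus d$ fill $[d_1,d_1+1)\times[d_2,d_2+1)$, which tile $\re^2$ as $(k_1,k_2)$ range over $\Z^2$, while for each fixed horizontal position the third coordinate fills a unit interval that is shifted by an integer as $k_3$ varies; hence the cells are disjoint and cover $\He^1$. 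This shows that $Q_\cH\oplus c$ is a right-fundamental domain for every $c\in\He^1$, so the inner integral equals $1$ and the right-hand side collapses to $\int_{Q_\cH}|u|^p\,dm=\int_{\Te}\left|\frac{E}{m}\right|^p dm$, which is the claim.
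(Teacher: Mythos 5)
Your proposal is correct and takes essentially the same route as the paper's proof: a pointwise Jensen inequality, folding onto the fundamental domain by periodicity, Tonelli, and the key geometric fact that the right translates $Q_\cH\oplus(n\oplus q)^{-1}$, $n\in\Z^3$, tile $\He^1$ (checked in explicit coordinates via the group law), which forces the inner integral to equal $1$. The only differences are cosmetic: you fold into the periodized kernel $R$ before applying Jensen on $Q_\cH$, while the paper applies Jensen directly on $\He^1$ and folds afterwards.
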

\begin{proof}
Arguing as in the proof of \cite[Lemma 8.1.10]{AGS}, using the Jensen inequality, for any $x\in \He^1$ we get
\begin{eqnarray*}
\left|\frac{E \ast\rho(x)}{m \ast \rho(x)}\right|^{p} m \ast \rho(x) 
&\leq&\int_{\mathbb{H}^{1}}\left|\frac{E}{m}\right|^{p}(y) \rho(x\oplus y^{-1}) d m(y)\\
&=&\sum_{n\in\Z^3}\int_{n\oplus \Te}\left|\frac{E}{m}\right|^{p}(y) \rho(x\oplus y^{-1})dm(y)\\
&=&\sum_{n\in\Z^3}\int_{\Te}\left|\frac{E}{m}\right|^{p}(n\oplus z) \rho(x\oplus (n\oplus z)^{-1})dm(n\oplus z)\\
&=&\int_{\Te}\left|\frac{E}{m}\right|^{p}(z) \sum_{n\in\Z^3}\rho(x\oplus (n\oplus z)^{-1})dm(z)
\end{eqnarray*}
where we used that $y= n\oplus z$ with $n\in \Z^3$, the $\Te-$periodicity of $m$ and of ${E}/{m}$.
Integrating with respect to $x$ in $\Te$ we get
\begin{eqnarray*}
&&\int_{\Te}\left|\frac{E \ast\rho(x)}{m \ast\rho(x)}\right|^{p} m\ast \rho(x) dx
\leq 
\int_{\Te}\int_{\Te}\left|\frac{E}{m}\right|^{p}(z) \sum_{n\in\Z^3}\rho(x\oplus (n\oplus z)^{-1})dm(z)\,dx\\
&&=\int_{\Te}\left|\frac{E}{m}\right|^{p}(z)\bigg(\sum_{n\in\Z^3}\int_{\Te}\rho(x\oplus (n\oplus z)^{-1})dx\bigg) dm(z)= \int_{\Te}\left|\frac{E}{m}\right|^{p}(z)dm(z).
\end{eqnarray*}
The last equality comes from  
$$\sum_{n\in\Z^3}\int_{\Te}\rho(x\oplus (n\oplus z)^{-1})dx=\int_{\He^1}\rho(y)dy=1$$
and this equality
is due to the fact that, fixed $z\in \Te$,
$$\mathbb{H}^{1}= \cup_{n\in\Z^3} \Te\oplus (n\oplus z)^{-1}.$$
To prove it we have to show that for any $y\in \mathbb{H}^{1}$ there exists an unique $n\in\Z^3$ such that there exists $x\in \Te$ such that $y= x\oplus(n\oplus z)^{-1}$ or equivalently 
$y\oplus (n\oplus z)=x$.
By writing explicitly this last relation, we obtain $x_i=y_i+n_i+z_i$ with $i=1,2$ and 
$x_3=(y\oplus z)_3+n_3-n_2(z_1-y_1)+n_1(z_2-y_2)$, where we denoted by $(y\oplus z)_3$ the third component of $(y\oplus z)$.
Hence we take $n_i= -[y_i+z_i]$
and $x_i= M(y_i+z_i)$,
 $i=1,2$, where $[\cdot]$ and $M(\cdot)$ are respectively the integer part and the fractional part of a real number.
 Analogously 
$n_3=-[(y\oplus z)_3-n_2(z_1-y_1)+n_1(z_2-y_2)]$ and $x_3= M((y\oplus z)_3-n_2(z_1-y_1)+n_1(z_2-y_2))$.
\end{proof}

\begin{lemma}\label{nostro819}
Under Assumption~$(H)$, let $m$ be a time continuous $Q_\cH$-periodic solution of~\eqref{cont}. 
For $\rho_{\varepsilon}$ as in~\eqref{rhoeps}, set
$$
m^{\varepsilon}:=m \ast \rho_{\varepsilon}, \quad E^{\varepsilon}:=\left(v\, m\right) \ast\rho_{\varepsilon}, \quad v^{\varepsilon}:=\frac{E^{\varepsilon}}{m^{\varepsilon}}.
$$
Then $m^{\varepsilon}$, $E^{\varepsilon}$ and $v^{\varepsilon}$ are $Q_\cH$-periodic. Moreover $m^{\varepsilon}$ is a continuous solution of
\begin{equation}\label{8128nostra} 
\partial_{t} m^{\varepsilon}-\diver_{\cH}(v^{\varepsilon}\, m^{\varepsilon})=0,\qquad \textrm{in}\ \mathbb{H}^{1} \times(0, T),
\end{equation}
where $v^{\varepsilon}$ fulfills the regularity property~\eqref{818nostra} and the uniform integrability bound
\begin{equation}\label{8126nostra}
\int_{\Te}\left|v^{\varepsilon}(x,t)\right|^{p} d m_t^{\varepsilon}(x) \leq C, \quad \forall \varepsilon>0,\ \forall t \in(0, T),\ p \geq 1.
\end{equation}
Moreover, as $\varepsilon\to0^+$, $E^{\varepsilon}_t \rightarrow v_t\, m_t$ narrowly and
\begin{equation}\label{8127nostra}
\lim _{\varepsilon \to 0}\left\|v^{\varepsilon}\right\|_{L^p\left(m^{\varepsilon} ; \Te\right)}=\left\|v(\cdot,t)\right\|_{L^p\left(m ; \Te\right)} \qquad \forall t \in(0, T),
\end{equation}
where $\left\|\cdot\right\|_{L^p\left(m ; \Te\right)}$ is the $L^p$ norm w.r.t. $m$ over $\Te$.
\end{lemma}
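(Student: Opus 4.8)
The plan is to transfer, by convolution, each property of the non‑smooth pair $(m,v)$ to the mollified pair $(m^{\varepsilon},v^{\varepsilon})$, exploiting that the group convolution \eqref{conH} preserves $Q_\cH$‑periodicity and commutes with the left‑invariant fields $X_1,X_2$. First I would record the structural facts. Since $m(\cdot,t)$ and $v(\cdot,t)$ are $Q_\cH$‑periodic, so are $m^{\varepsilon}$ and $E^{\varepsilon}=(vm)*\rho_{\varepsilon}$ by Proposition \ref{propconvH}(ii), hence so is their ratio $v^{\varepsilon}$. Because $m(\cdot,t)$ is a nonnegative density of mass $1$ on $Q_\cH$, Proposition \ref{propconvH}(vi) gives $m^{\varepsilon}(x,t)>0$ for every $x$, so $v^{\varepsilon}$ is well defined; by Proposition \ref{propconvH}(iii) both $m^{\varepsilon}$ and $E^{\varepsilon}$ are $C^{\infty}$ in $x$, with all $x$‑derivatives controlled by the derivatives of $\rho_{\varepsilon}$ (which fall entirely on the kernel). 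By periodicity and continuity $m^{\varepsilon}(\cdot,t)$ attains a strictly positive minimum on $\Te$, uniform in $t$ since $v$ and $m$ are bounded; thus $v^{\varepsilon}=E^{\varepsilon}/m^{\varepsilon}$ is, for each fixed $\varepsilon$, Lipschitz in $x$ with constant bounded in $t$, which yields at once the regularity requirement \eqref{818nostra}.

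Next I would derive \eqref{8128nostra}. The conceptual reason is the commutation $\diver_{\cH}\big((vm)*\rho_{\varepsilon}\big)=\big(\diver_{\cH}(vm)\big)*\rho_{\varepsilon}$, which is Proposition \ref{propconvH}(v) applied componentwise and rests on the left‑invariance \eqref{LIVF} of $X_1,X_2$. Since $\partial_t m^{\varepsilon}=(\partial_t m)*\rho_{\varepsilon}$ and $\partial_t m=\diver_{\cH}(vm)$ in the sense of \eqref{814nostraQH}, this gives $\partial_t m^{\varepsilon}=\diver_{\cH}E^{\varepsilon}=\diver_{\cH}(v^{\varepsilon}m^{\varepsilon})$. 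Concretely I would make this rigorous by writing $m^{\varepsilon}(x,t)=\int_{\Te}\big(\sum_{n\in\Z^3}\rho_{\varepsilon}(x\oplus(n\oplus z)^{-1})\big)\,dm_t(z)$ and testing the weak formulation \eqref{814nostraQH} against the periodized kernel $z\mapsto\sum_{n}\rho_{\varepsilon}(x\oplus(n\oplus z)^{-1})$, which is an admissible element of $C^{\infty}(\Te)$, exactly the object appearing in Lemma \ref{8110nostro}. The left‑invariance is what allows the $z$‑derivatives of this kernel to be converted into $x$‑derivatives of $E^{\varepsilon}$, and this conversion is the step where the Heisenberg group structure is genuinely used.

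The uniform bound \eqref{8126nostra} is then immediate from Lemma \ref{8110nostro}, applied with $m=m_t$ and $E=v_t m_t$ (whose density with respect to $m_t$ is $v_t$): since $v^{\varepsilon}m^{\varepsilon}=E^{\varepsilon}$,
\begin{equation*}
\int_{\Te}|v^{\varepsilon}_t|^{p}\,dm^{\varepsilon}_t=\int_{\Te}\Big|\frac{E^{\varepsilon}}{m^{\varepsilon}}\Big|^{p} m^{\varepsilon}\,dx\le\int_{\Te}|v_t|^{p}\,dm_t\le\|v\|_{\infty}^{p}=:C,
\end{equation*}
uniformly in $\varepsilon$ and $t$. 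The narrow convergence $E^{\varepsilon}_t\to v_t m_t$ is the standard approximate‑identity statement for the group convolution: since $\rho_{\varepsilon}$ concentrates at the identity $0$ as $\varepsilon\to0$ and $\int_{\He^1}\rho_{\varepsilon}=1$, testing against $\phi\in C^{0}(\Te)$ and using Proposition \ref{propconvH}(iv) gives $\int_{\Te}\phi\,dE^{\varepsilon}_t\to\int_{\Te}\phi\,d(v_tm_t)$.

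Finally, for \eqref{8127nostra}, the inequality $\|v^{\varepsilon}\|_{L^p(m^{\varepsilon};\Te)}\le\|v(\cdot,t)\|_{L^p(m;\Te)}$ is again the content of Lemma \ref{8110nostro}, which yields the $\limsup$ bound. For the matching $\liminf$ bound I would invoke the joint lower semicontinuity, with respect to the narrow convergence $(m^{\varepsilon}_t,E^{\varepsilon}_t)\to(m_t,v_tm_t)$ established above, of the convex functional $(\mu,\nu)\mapsto\int_{\Te}|d\nu/d\mu|^{p}\,d\mu$ (set to $+\infty$ when $\nu\not\ll\mu$), as in \cite[Lemma 9.4.3]{AGS}; this gives $\|v(\cdot,t)\|_{L^p(m;\Te)}^{p}\le\liminf_{\varepsilon}\|v^{\varepsilon}\|_{L^p(m^{\varepsilon};\Te)}^{p}$ and hence equality. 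I expect this lower‑semicontinuity step, together with the rigorous justification of the commutation of $\diver_{\cH}$ with the periodized convolution in the derivation of \eqref{8128nostra}, to be the two main obstacles; everything else is a direct transcription of the classical mollification argument adapted to the left‑invariant, $Q_\cH$‑periodic setting.
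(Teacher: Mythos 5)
Your proposal is correct and follows essentially the same route as the paper's proof: periodicity via Proposition~\ref{propconvH}-$(ii)$, strict positivity of $m^{\varepsilon}$ via the convolution properties, smoothness of $m^\varepsilon,E^\varepsilon$ (the paper computes $D\rho_\varepsilon$ explicitly) giving~\eqref{818nostra}, commutation of $\diver_\cH$ with the group convolution via left-invariance (Proposition~\ref{propconvH}-$(v)$) for~\eqref{8128nostra}, Lemma~\ref{8110nostro} for~\eqref{8126nostra}, and lower semicontinuity of $(E,m)\mapsto\int_{\Te}|E/m|^p\,dm$ combined with Lemma~\ref{8110nostro} for~\eqref{8127nostra}. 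Your extra step of testing~\eqref{814nostraQH} against the periodized kernel is just a more explicit justification of the same commutation the paper invokes.
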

\begin{proof}
Proposition~\ref{propconvH}-$(ii)$ ensures that $m^{\varepsilon}$, $E^{\varepsilon}$ and $v^{\varepsilon}$ are $Q_\cH$-periodic.
From Proposition~\ref{propconvH}-$(v)$ and the continuity of $m^{\varepsilon}(x,t)$ w.r.t. $x$ and $t$, we get $m^{\varepsilon}(x,t)>0$ for any $(x,t)\in \Te\times[0,T]$.
From the definition of $\rho_ {\varepsilon}$, since $m$ is bounded then $m^{\varepsilon}$ is bounded. From the definition of the ${\cH}$-norm \eqref{norm}
we get that
$$D\rho_ {\varepsilon}(x)=C(\varepsilon)e^{-(\|\delta_{1/\varepsilon}(x)\|^4_\cH)}
\left(
\frac{4x_1(x_1^2+x_2^2)}{\varepsilon^4}, \frac{4x_2(x_1^2+x_2^2)}{\varepsilon^4}, \frac{2x_3}{\varepsilon^4}
\right).
$$
Hence, in $\Te$, the spatial gradient of $m^{\varepsilon}$ is bounded by a constant dependent on $\varepsilon$.
Analogously, in $\Te$, $E^{\varepsilon}$ and its spatial gradient are bounded in space by the product of $\left\|v\right\|_{L^{1}\left(m;\Te\right)}$ with a constant depending on $\varepsilon$. 
Moreover, from the positivity of $m^{\varepsilon}$, the regularity assumptions \eqref{818nostra} for $v^{\varepsilon}$ hold. 
Lemma \ref{8110nostro} shows that \eqref{8126nostra} holds.
From Proposition~\eqref{propconvH}-$(v)$, we get $\diver_{\cH}(v\, m_{t})\ast \rho_{\varepsilon}=\diver_{\cH}(v^{\varepsilon} m^{\varepsilon}_t).$
Since $m$ solves \eqref{cont}, then $m^{\varepsilon}$ solves the continuity equation \eqref{8128nostra}.
Finally,
general lower semicontinuity results on integral functionals defined on measures of the form
$$
(E, m) \mapsto \int_{\Te}\left|\frac{E}{m}\right|^{p} d m
$$
and Lemma \ref{8110nostro} give \eqref{8127nostra}.
\end{proof}


In the following lemma we obtain an elementary result on existence and uniqueness for the characteristic system associated with equation~\eqref{8128nostra}.
\begin{lemma}\label{8.1.4}
Let~$v^\varepsilon$ be the field introduced in Lemma~\ref{nostro819}. Then for any $x\in\He^1$ and $s\in[0,T]$, the ODE
\begin{equation}\label{819nostra}
Y'(t)=v^{\varepsilon}\left(Y(t),t\right)\, B^T\left(Y(t)\right),\qquad Y(s)=x
\end{equation}
admits a unique solution which is defined for every ~$t\in [0,T]$.
\end{lemma}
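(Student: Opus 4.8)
The plan is to read \eqref{819nostra} as a Carath\'eodory ODE $Y'(t)=f(Y(t),t)$ with right-hand side $f(y,t):=v^\varepsilon(y,t)\,B^T(y)$, and to split the argument into two standard pieces: (a) local existence and uniqueness via the Cauchy--Lipschitz (Carath\'eodory) theorem, and (b) a global a priori bound that rules out finite-time blow-up, obtained from Gronwall's inequality. All the needed properties of the coefficient $v^\varepsilon$ are already available from Lemma~\ref{nostro819}.

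First I would collect the regularity of the right-hand side. By Lemma~\ref{nostro819} the field $v^\varepsilon$ is $Q_\cH$-periodic and smooth in $x$; since $m^\varepsilon$ is continuous and strictly positive on the compact set $\Te\times[0,T]$, it is bounded below by a positive constant, so $v^\varepsilon=E^\varepsilon/m^\varepsilon$ and its spatial gradient are bounded on $\Te\times[0,T]$ by a constant $L_\varepsilon$ depending only on $\varepsilon$. By periodicity this makes $v^\varepsilon(\cdot,t)$ globally $L_\varepsilon$-Lipschitz on $\He^1$, uniformly in $t$, which is precisely the integrability property~\eqref{818nostra}; moreover $v^\varepsilon$ is measurable in $t$, inheriting this from $v$. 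A useful quantitative estimate is $|E^\varepsilon(x,t)|\le \|v\|_\infty\, m^\varepsilon(x,t)$, since $|v|\le\|v\|_\infty$ may be pulled out of the (nonnegative) convolution; hence $|v^\varepsilon|\le\|v\|_\infty$ everywhere. Finally $B^T(y)$ is affine in $y$, hence smooth with linear growth $\|B^T(y)\|\le C(1+|y|)$.

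Next I would invoke local existence and uniqueness. For $y_1,y_2$ in a fixed ball one has
\begin{equation*}
|f(y_1,t)-f(y_2,t)|\le L_\varepsilon\,C(1+|y_1|)\,|y_1-y_2|+\|v\|_\infty\,C\,|y_1-y_2|,
\end{equation*}
so $f$ is locally Lipschitz in $y$, uniformly for $t\in[0,T]$, and measurable in $t$, with the local bound $|f(y,t)|\le C\|v\|_\infty(1+|y|)$ integrable in $t$. The Carath\'eodory/Cauchy--Lipschitz theorem then yields a unique maximal absolutely continuous solution $Y$ of \eqref{819nostra} on a subinterval of $[0,T]$ containing $s$.

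The only remaining point---and the one place where the structure of the problem enters---is to show that this maximal solution does not explode before reaching the endpoints of $[0,T]$, the delicate feature being that $B^T$ grows linearly, so $f$ is \emph{not} globally bounded and a naive a priori bound is unavailable. Using $|v^\varepsilon|\le\|v\|_\infty$ together with the linear growth of $B^T$ gives $|Y'(t)|=|f(Y(t),t)|\le C\|v\|_\infty(1+|Y(t)|)$ for a.e.\ $t$, and Gronwall's inequality then yields $|Y(t)|\le\bigl(|x|+C\|v\|_\infty|t-s|\bigr)e^{C\|v\|_\infty|t-s|}$ on the maximal interval. This uniform bound prevents blow-up, so the maximal solution extends to all of $[0,T]$, and global uniqueness follows from the local uniqueness patched along $[0,T]$ via the uniform Lipschitz estimate.
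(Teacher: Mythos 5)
Your overall route---Carath\'eodory local existence/uniqueness plus an a priori bound excluding blow-up---is sound and reaches the conclusion; it is close in spirit to the paper's proof but differs in how the bound is obtained. The paper writes the system componentwise, $Y_1'=v_1^\varepsilon$, $Y_2'=v_2^\varepsilon$, $Y_3'=-Y_2v_1^\varepsilon+Y_1v_2^\varepsilon$, and exploits this cascade structure: boundedness of $v^\varepsilon$ bounds $Y_1,Y_2$ directly (linearly in time), and then the third equation has a bounded right-hand side, so $Y_3$ is bounded too; no Gronwall inequality is needed and the bound is linear rather than exponential. Your argument instead uses the generic linear-growth estimate $|v^\varepsilon(y,t)B^T(y)|\le C\|v\|_\infty(1+|y|)$ and Gronwall; this is equally valid, and your preliminary observation that $|v^\varepsilon|\le\|v\|_\infty$ everywhere (pulling $\|v\|_\infty$ out of the nonnegative convolution) is correct and is exactly the key bound both proofs rest on.

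However, one assertion in your write-up is false and must be removed: ``by periodicity this makes $v^\varepsilon(\cdot,t)$ globally $L_\varepsilon$-Lipschitz on $\He^1$.'' $Q_\cH$-periodicity is invariance under the left translations $x\mapsto n\oplus x$, which are \emph{not} Euclidean isometries: differentiating the identity $f(n\oplus x)=f(x)$ gives $\partial_{x_1}f(n\oplus x)=\partial_{x_1}f(x)+n_2\,\partial_{x_3}f(x)$ and $\partial_{x_2}f(n\oplus x)=\partial_{x_2}f(x)-n_1\,\partial_{x_3}f(x)$, so the Euclidean gradient of a $Q_\cH$-periodic function grows linearly in $|n|$ unless $\partial_{x_3}f\equiv 0$; only the \emph{horizontal} gradient is left-invariant, hence periodic and bounded. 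This is precisely the subtlety that prevents a direct application of \cite[Theorem 8.2.1]{AGS} and the reason condition~\eqref{818nostra} is formulated only on compact sets (and the reason Lemma~\ref{nostro819} bounds the spatial gradient of $m^\varepsilon$ ``in $\Te$'', not globally). Fortunately the error is inessential to your argument: the local Lipschitz estimate in your display still holds once $L_\varepsilon$ is replaced by a radius-dependent constant, the Gronwall step uses only the global bound $|v^\varepsilon|\le\|v\|_\infty$ and the affine structure of $B^T$, and the final uniqueness-patching needs Lipschitz continuity only on the compact set in which the a priori bound confines the solution. So the proof stands after deleting that sentence and invoking~\eqref{818nostra} as the local condition it actually is.
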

\begin{proof} 
Equation \eqref{819nostra} reads
\[
Y'_{1}= v^{\varepsilon}_{1},\qquad
Y'_{2}= v^{\varepsilon}_{2},\qquad
Y'_{3}= -Y_{2}v^{\varepsilon}_{1}+Y_{1}v^{\varepsilon}_{2}.
\]
The boundedness of $v^\varepsilon$ entails the boundedness of $Y_{1}$ and $Y_{2}$. Afterwards, we deduce the boundedness of~$Y_{3}$. 
Following the procedure of \cite[Lemma 8.1.4]{AGS} we get the result.
\end{proof}

\begin{proposition}\label{818}
Under Assumption~$(H)$, let $m^{\varepsilon}_t\in {\mathcal P}_{per}(\He^1)$, $t \in[0, T]$, be a family of narrowly continuous measures solving equation~\eqref{8128nostra} with $m^{\varepsilon}_0:=m_0\ast\rho_{\varepsilon}$. 
Then for $m^{\varepsilon}_{0}$-a.e. $x \in \He^1$ the characteristic system \eqref{819nostra} with $s=0$ admits a solution $Y^{\varepsilon}$ on $[0, T]$ and
\begin{equation}\label{8120nostra}
m^{\varepsilon}_{t}=Y^{\varepsilon}(t){\#} m^{\varepsilon}_{0}, \quad \forall t \in[0, T].
\end{equation}
\end{proposition}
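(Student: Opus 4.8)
The plan is to follow the classical method of characteristics, as in the Euclidean theory of \cite[Section~8.1]{AGS}, adapted to the periodic Heisenberg setting. First I would set up the flow. By Lemma~\ref{8.1.4}, for every $x\in\He^1$ the system~\eqref{819nostra} with $s=0$ has a unique solution $Y^\varepsilon(\cdot\,;x)$ defined on all of $[0,T]$. Since $m^\varepsilon>0$ is smooth, the field $v^\varepsilon=E^\varepsilon/m^\varepsilon$ is smooth and, by Lemma~\ref{nostro819}, bounded and Lipschitz in~$x$ in the sense of~\eqref{818nostra}; hence the flow $(t,x)\mapsto Y^\varepsilon(t;x)$ is continuous and $x\mapsto Y^\varepsilon(t;x)$ is a homeomorphism of $\He^1$ for each $t$. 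A key remark is that, writing~\eqref{819nostra} as $Y'=\sum_i v_i^\varepsilon(Y)\,X_i(Y)$, the left-invariance~\eqref{LIVF} of the $X_i$ together with the $1_\cH$-periodicity of $v^\varepsilon$ makes the flow equivariant under integer left translations, $Y^\varepsilon(t;n\oplus x)=n\oplus Y^\varepsilon(t;x)$ for all $n\in\Z^3$. Consequently the flow descends to a continuous flow on the torus~$\Te$, and the candidate curve $\mu_t:=Y^\varepsilon(t)\#m_0^\varepsilon$ is a well-defined, narrowly continuous element of ${\mathcal P}(\Te)$ with $\mu_0=m_0^\varepsilon$.

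Next I would check that $\mu_t$ is a solution of~\eqref{8128nostra}. For $\zeta\in C^\infty(\Te)$ one differentiates $\int_\Te\zeta\,d\mu_t=\int\zeta(Y^\varepsilon(t;x))\,dm_0^\varepsilon(x)$ under the integral sign (justified by the uniform bounds on $Y^\varepsilon$ and $\dot Y^\varepsilon$), and uses~\eqref{819nostra} together with the identity $D_\cH\zeta=D\zeta\,B$ to rewrite the contracted term as $D\zeta\cdot(v^\varepsilon B^T)=v^\varepsilon\cdot D_\cH\zeta$. Recognising the resulting expression as the weak formulation~\eqref{814nostraQH} shows that $\mu_t$ is a narrowly continuous distributional solution of~\eqref{8128nostra} with initial datum~$m_0^\varepsilon$.

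It then remains to identify $\mu_t$ with the given solution $m^\varepsilon_t$, i.e.\ to prove uniqueness for~\eqref{8128nostra} among narrowly continuous curves with datum~$m_0^\varepsilon$; this is the heart of the matter and the step where the regularity~\eqref{818nostra} is decisive. The mechanism I would use is duality with the adjoint transport equation: given $\psi\in C^\infty(\Te)$, the smoothness and Lipschitz character of $v^\varepsilon B^T$ allow one to solve the backward transport equation associated with~\eqref{8128nostra} with terminal datum $\zeta(\cdot,T)=\psi$, the solution being transported along the characteristics~\eqref{819nostra} and hence of class $C^1$. Testing the continuity equation against this $\zeta$, and exploiting that by $1_\cH$-periodicity and the left-invariance of $\diver_\cH$ the spatial integration by parts on~$\Te$ produces no boundary terms, one finds that $t\mapsto\int_\Te\zeta(\cdot,t)\,dm^\varepsilon_t$ is constant, whence $\int_\Te\psi\,dm^\varepsilon_t=\int_\Te\psi\,d\mu_t$. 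Since $\psi$ is arbitrary, this yields $m^\varepsilon_t=\mu_t=Y^\varepsilon(t)\#m_0^\varepsilon$ for every $t$, which is~\eqref{8120nostra}.

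I expect the main obstacle to be precisely this uniqueness step on the compact sub-Riemannian torus: one must check that the periodic smooth drift yields a genuine $C^1$ solution of the dual transport equation and that the characteristic and Gronwall-type estimates of the Euclidean theory transfer to~$\Te$. Both points rest on the regularity and periodicity of $v^\varepsilon$ established in Lemma~\ref{nostro819} and on the good behaviour of $\diver_\cH$ under $1_\cH$-periodicity already exploited in Lemma~\ref{8110nostro}; the remaining verifications (equivariance of the flow, differentiation under the integral, absence of boundary terms) are routine once these are in place.
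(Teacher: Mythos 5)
Your proposal is correct and takes essentially the same route as the paper: the paper's proof likewise combines Lemma~\ref{8.1.4} for the flow, the adaptation of \cite[Lemma 8.1.6]{AGS} (with $D$ replaced by $D_\cH$ and $\langle D_{\cH}\varphi, v^{\varepsilon}\rangle= \langle D\varphi, v^{\varepsilon}\,B\rangle$) to show that $Y^\varepsilon(t)\#m^\varepsilon_0$ solves \eqref{8128nostra}, and the argument of \cite[Proposition 8.1.7]{AGS} --- which is exactly the duality with the backward transport equation along characteristics that you describe --- to identify any narrowly continuous solution with this push-forward. The only difference is presentational: the paper leaves these steps as citations to \cite{AGS}, whereas you spell out the translation-equivariance of the flow and the uniqueness mechanism explicitly.
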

\begin{proof}
The proof follows the steps of  \cite[Lemma 8.1.6, Proposition 8.1.7]{AGS}, 
we only give the sketch of the proof of these steps.\\
1) From Lemma \ref{nostro819}, $v^{\varepsilon}$ satisfies \eqref{818nostra}; hence by Lemma~\ref{8.1.4}, for any $x\in\mathbb{H}^{1}$, \eqref{819nostra} with $s=0$ admits an unique solution $Y^{\varepsilon}(t)$ defined in $[0, T]$.\\
2) $Y^{\varepsilon}(t){\#} m^\varepsilon_{0}$ is a continuous solution of \eqref{8128nostra} (in the topology of $C([0,T],{\mathcal P}_{per}(\He^1))$). 
Indeed,  still from Lemma \ref{nostro819}, the velocity field $v^{\varepsilon}$ satisfies~\eqref{818nostra} and \eqref{8126nostra}. 
At this point we follow the proof of \cite[Lemma 8.1.6]{AGS} where we replace $\mathbb{R}^{d}$ with $\He^1$ and $D\varphi$ with $D_{\cH}\varphi$ with $\varphi \in C_c^{\infty}\left(\Te\times(0, T)\right)$ noting that $\langle D_{\cH}\varphi, v^{\varepsilon}\rangle= \langle D\varphi, v^{\varepsilon}\,B\rangle$.\\
3) We follow the proof of \cite[Proposition 8.1.7]{AGS} replacing $\mathbb{R}^{d}$ with $\He^1$ and $D$ with $D_{\cH}$; we get that  any solution $m^{\varepsilon}$ of \eqref{8128nostra} can be represented as \eqref{8120nostra}.
\end{proof}
\begin{Proofc}{Proof of Theorem \ref{821}.}
We adapt the arguments of the proof of \cite[Theorem 8.2.1]{AGS} and of \cite[Theorem 4.18]{C}, hence we only sketch the key steps.\\
1) For $m$ as in the statement, we apply Lemma~\ref{nostro819} and we find $Q_\cH$-periodic approximations $m^{\varepsilon}, v^{\varepsilon}$ satisfying the equation~\eqref{8128nostra}.
Therefore, by Proposition~\ref{818}, we obtain the representation formula $m^{\varepsilon}=Y^{\varepsilon}\# m_{0},$ where $Y^{\varepsilon}$ is the solution of~\eqref{819nostra} with $s=0$. \\
2) Since $Y^\varepsilon$ naturally induces a map from~$\Te$ to~$\Gamma$, we define the measure $\eta^\varepsilon\in {\mathcal P}(\Te\times \Gamma)$ as $\eta^\varepsilon:= (i\times Y^\varepsilon)\#m_0$ where $(i\times Y^\varepsilon):\Te\to \Te\times \Gamma$ with $(i\times Y^\varepsilon)(x):=(x,Y^\varepsilon)$ and $Y^\varepsilon $ denotes the solution of~\eqref{819nostra} with $s=0$. In other words, for any Borel function~$\phi$ defined in~$\Te\times\Gamma$, the measure~$\eta^\varepsilon$ verifies
\begin{equation*}
\int_{\Te\times\Gamma}\phi(x,\gamma)d \eta^\varepsilon(x,\gamma)=\int_{\Te}\phi(x,Y^\varepsilon)d m_0(x).
\end{equation*}
Following  the procedure of \cite[Theorem 4.18]{C} we prove that $(\eta^\varepsilon)$ is a relatively compact family of measures on $\Te\times\Gamma$
and that, if $\eta$ is a narrow cluster point of~$(\eta^\varepsilon)$, then $m_t$ can be represented by $m_t=e_t\#\eta$ and $m_0$ is the first marginal of~$\eta$.\\ 
3) To show that $\eta$ is concentrated on the solutions of the differential equation \eqref{ODE}, we follow the arguments of the proof of \cite[Theorem 8.2.1]{AGS} and
we get the following ``superposition principle''
\begin{equation}\label{superA}
\int_{\Te\times \Gamma}\left|\gamma(t)-x-\int_{0}^{t} v(\gamma(\tau), \tau)\, B^T(\gamma(\tau))d \tau\right| d \eta(x, \gamma)=0 \quad \forall t \in[0, T].
\end{equation}
Then we disintegrate $\eta$ with respect to its first marginal $m_0$ (see~\cite[pag 122]{AGS} or \cite[Theorem 8.5]{C}):
\begin{equation}\label{dis}
d \eta(x, \gamma)=d\eta_x(\gamma)\,dm_0(x)
\end{equation}
and from \eqref{superA} we get for $m_0$-a.e. $x\in \Te$, $\eta_x$-a.e. $\gamma$ is a solution of the \eqref{ODE}.\\
4) The converse implication is exactly as in \cite{AGS} replacing, as usual, the Euclidean gradient $D$ with the horizontal gradient $D_{\cH}$.
\end{Proofc}

\section{An application to first order MFGs}\label{MFGS}
As application of Theorem~\ref{821}, we study the Mean Field Games system~\eqref{eq:MFGintrin} where $m_0$, $F$ and~$G$ are $Q_\cH$-periodic w.r.t. $x$. 
We remark that in this setting we cannot apply the results obtained in \cite{MMT} because we do not have the global summability assumption for the drift in~\eqref{eq:MFGintrin}-$(ii)$ and we do not assume that $m_0$ has compact support. Here, in order to get the existence of a weak solution, we shall use a vanishing viscosity method with the horizontal Laplacian so to preserve the $Q_\cH$-periodicity of the problem. For second order Mean Field Games under H\"ormander condition, we refer the reader to \cite{DF, MMM}.

Let us recall that MFG system~\eqref{eq:MFGintrin} arises when the generic player with state~$x$ at time~$t$ wants to choose the control $\alpha=(\alpha_1, \alpha_2)\in L^2([t,T];\R^2)$ so to minimize the cost
\begin{equation}\label{Jgen}
J_t^{m}(\gamma,\alpha):=\int_t^T\left[\frac12 |\alpha(\tau)|^2+F[m_\tau](\gamma(\tau))\right]\,d\tau+G[m_T](\gamma(T))
\end{equation}
where $m=(m_t)_{t\in[0,T]}$ is the evolution of the whole population's distribution while $(\gamma,\alpha)$ is an {\it horizontal curve} with respect to the two vector fields $X_1$ and $X_2$ defined in \eqref{vectorFields}:
\begin{equation}\label{DYNH}
\gamma'(s)=\alpha_1(s)X_1(\gamma(s))+\alpha_2(s)X_2(\gamma(s))= \alpha(s)B^T(\gamma(s))\quad \textrm{a.e. }[t,T], \quad \gamma(t)=x.
\end{equation}


Throughout this section, unless otherwise explicitly stated, we shall require the following hypotheses:
\begin{enumerate}
\item[(H1)]\label{H1} the functions~$F$ and $G$ are real-valued functions, continuous on ${\mathcal P}_{per}(\He^1)\times\He^1$, moreover, for any fixed $m\in{\mathcal P}_{per}(\He^1)$, $F[m](\cdot)$ and $G[m](\cdot)$ are $Q_\cH $-periodic;
\item[(H2)]\label{H2} the map $m\to F[m](\cdot)$ is Lipschitz continuous from~${\mathcal P}_{per}(\He^1)$ to $C^{2}(\re^3)$; moreover, there exist~$C\in \mathbb R$ and $\delta_0\in(0,1]$ such that
$$\|F[m](\cdot)\|_{C^{2+\delta_0}(\re^3)}, \|G[m](\cdot)\|_{C^{2}(\re^3)}\leq C,\qquad \forall m\in {\mathcal P}_{per}(\He^1);$$
\item[(H3)]\label{H4} the function~$m_0:\He^1\to \re$ is nonnegative, $C^0$, $Q_\cH$-periodic with $\int_{Q_\cH }m_0dx=1$.
\end{enumerate}
\begin{example} Easy examples of $F$ and $G$ are given by the convolution of a regular kernel (as the one defined in \eqref{conH}-\eqref{rhoeps}) with~$m$.
In this case, Proposition~\ref{propconvH} ensures that assumptions~(H1) and~(H2) are satisfied.
\end{example}
We now introduce our definitions of solution of the MFG system~\eqref{eq:MFGintrin} and state the main result concerning its existence.
\begin{definition}\label{defsolmfg}
A couple $(u,m)\in W^{1,\infty}(\He^1\times[0,T])\times C^0([0,T];{\mathcal P}_{per}(\He^1))$, is a solution of system~\eqref{eq:MFGintrin} if:
\begin{itemize}
\item[1)] for each $ t\in [0,T]$, $m_t$ is absolutely continuous w.r.t. the Lebesgue measure. Let $m(\cdot, t)$ denote the density of $m_t$. The function $(x,t)\mapsto m(x,t)$ is bounded;
\item[2)] Equation~\eqref{eq:MFGintrin}-$(i)$ is satisfied by $u$ in the viscosity sense in~$\He^1\times(0,T)$ while equation~\eqref{eq:MFGintrin}-$(ii)$ is satisfied by $m$ in the sense of distributions  in~$\He^1\times(0,T)$.
\end{itemize}
\end{definition}
In order to give a more detailed description of the MFG, it is expedient to use the notion of {\it mild} solution, introduced by~\cite{CC} and reminiscent of the Lagrangian approach (see~\cite{BCS}).\\
For any $(x,t)\in \He^1\times(0,T)$, we define 
\begin{equation*}
\mathcal A(x,t):=\{(\gamma, \alpha)\in AC([t,T]; \He^1)\times L^2([t,T]; \re^2);\, (\gamma, \alpha) {\text { solves }}\eqref{DYNH}\}.
\end{equation*}
Recall that $\Gamma$ and $e_t$ are defined in Section~\ref{probsect}.
Given $m_0\in{\mathcal P}_{per}(\He^1)$,  we define 
$$\mathcal P_{m_0}(\Gamma)=\{\eta\in{\mathcal M}(\Gamma):\ m_0 =e_0\# \eta \textrm{ and }e_t\# \eta\in {\mathcal P}_{per}(\He^1), \quad \forall t\in[0,T] \}.$$
For any $\eta\in \mathcal P_{m_0}(\Gamma)$ and $x\in \He^1$, 
we introduce the cost $J^{\eta}_t(\gamma, \alpha):=J^{m^\eta}_t(\gamma, \alpha)$, where $m^\eta:=(e_t\#\eta)_{t\in[0,T]}$, 
and the set of optimal horizontal arcs starting at~$x$ 
\begin{equation}\label{Gammaeta}
\Gamma^{\eta}[x]:= \{\overline\gamma:\ (\overline\gamma, \overline\alpha) \in \mathcal A(x,0): 
J^{\eta}_t(\overline\gamma, \overline\alpha)= \min_{(\gamma, \alpha) \in \mathcal A(x,0)} J^{\eta}_t(\gamma, \alpha)\}.
\end{equation}
\begin{definition}\label{mfgequil}
A measure $\eta\in \mathcal P_{m_0}(\Gamma)$ is a {\em MFG equilibrium} for $m_0$ if $supp\, \eta\subseteq \bigcup\limits_{x\in\He^1} \Gamma^{\eta}[x].$

\end{definition}
\begin{definition}\label{mild}
A couple $(u,m)\in C^0(\He^1\times [0,T])\times C^0([0,T]; \mathcal P_{per}(\He^1))$ is called {\em {mild solution}} of system~\eqref{eq:MFGintrin} if there exists a MFG equilibrium $\eta$ for $m_0$ such that: $m_t =e_t\# \eta$ for any $t\in[0,T]$ and $u(x,t)= \inf_{(\gamma, \alpha) \in \mathcal A(x,t)} J^{\eta}_t(\gamma, \alpha)$.
\end{definition}
Now we can state the main result of this section.
\begin{theorem}\label{thm:main}
Assume $(H1)$-$(H3)$. Then, \\
$(i)$ system \eqref{eq:MFGintrin} has a solution $(u,m)$,\\
$(ii)$ any solution $(u,m)$ is also a mild solution.
\end{theorem}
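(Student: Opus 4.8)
The plan is to establish $(i)$ by a vanishing viscosity approximation based on the \emph{horizontal} Laplacian, and $(ii)$ by feeding the solution into the representation formula of Theorem~\ref{821} and running an optimal-control verification argument. For $(i)$, I would fix $\varepsilon>0$ and consider the regularized system
$$-\partial_t u^\varepsilon-\varepsilon\Delta_\cH u^\varepsilon+\tfrac12|D_\cH u^\varepsilon|^2=F[m^\varepsilon_t],\qquad \partial_t m^\varepsilon-\varepsilon\Delta_\cH m^\varepsilon-\diver_\cH(m^\varepsilon D_\cH u^\varepsilon)=0,$$
with $u^\varepsilon(\cdot,T)=G[m^\varepsilon_T]$ and $m^\varepsilon(\cdot,0)=m_0$. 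Since $\Delta_\cH$ is hypoelliptic (H\"ormander's condition) and preserves $Q_\cH$-periodicity, each equation is well posed on the torus $\Te$ with the regularity granted by (H1)--(H2), and a Schauder (or Kakutani) fixed point for the map $\mu\mapsto u\mapsto m$ produces a solution $(u^\varepsilon,m^\varepsilon)$. The core of the argument is a family of a priori bounds \emph{uniform in $\varepsilon$}: the comparison principle gives $\|u^\varepsilon\|_\infty\le C$ from the bounded data, while a Bernstein-type estimate adapted to the vector fields $X_1,X_2$, exploiting the $C^{2+\delta_0}$/$C^2$ control on $F,G$ in (H2), yields the crucial horizontal gradient bound $\|D_\cH u^\varepsilon\|_\infty\le C$; mass conservation together with this bounded drift gives that $m^\varepsilon_t\in\mathcal{P}_{per}(\He^1)$ is equicontinuous in time and enjoys a uniform $L^\infty$ density bound. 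Passing to the limit by Ascoli--Arzel\`a on $u^\varepsilon$ and narrow compactness on $m^\varepsilon$, stability of viscosity solutions gives~\eqref{eq:MFGintrin}-$(i)$, distributional stability gives~\eqref{eq:MFGintrin}-$(ii)$, and the continuity of the couplings (H1)--(H2) lets $F[m^\varepsilon_t],G[m^\varepsilon_T]$ converge; this yields a solution in the sense of Definition~\ref{defsolmfg}. The main obstacle here is exactly this uniform gradient bound: the Hamiltonian is noncoercive and the geometry is sub-Riemannian, so the estimate must be performed in the horizontal frame rather than by the classical coercive Bernstein argument.

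For $(ii)$, let $(u,m)$ be any solution. As $u\in W^{1,\infty}$, the field $v:=D_\cH u$ is bounded, measurable and $Q_\cH$-periodic, so Assumptions~$(H)$ hold and Theorem~\ref{821} applies to~\eqref{eq:MFGintrin}-$(ii)$, furnishing $\eta\in\mathcal{P}(\Te\times\Gamma)$ concentrated on the characteristics~\eqref{ODE} with $m_t=e_t\#\eta$ and $m_0=e_0\#\eta$; thus $\eta\in\mathcal{P}_{m_0}(\Gamma)$. Along $\eta$-a.e.\ such curve $\gamma$ the associated control, defined by $\gamma'=\alpha B^T$, is the optimal feedback $\alpha=-D_\cH u(\gamma,\cdot)$ realizing the infimum in the Hamiltonian of~\eqref{eq:MFGintrin}-$(i)$. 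The verification step differentiates $s\mapsto u(\gamma(s),s)$ along the flow and uses~\eqref{eq:MFGintrin}-$(i)$ together with the terminal datum to obtain the fundamental identity
$$u(x,0)=\int_0^T\left[\tfrac12|\alpha(s)|^2+F[m_s](\gamma(s))\right]ds+G[m_T](\gamma(T))=J^\eta_0(\gamma,\alpha),$$
while the same computation with an arbitrary admissible $(\tilde\gamma,\tilde\alpha)$ produces the one-sided inequality $J^\eta_0(\tilde\gamma,\tilde\alpha)\ge u(x,0)$, with equality forcing $\tilde\alpha=-D_\cH u$. Hence $u(x,t)=\inf_{(\gamma,\alpha)\in\mathcal{A}(x,t)}J^\eta_t(\gamma,\alpha)$, i.e.\ $u$ is the value function, and every characteristic is a minimizer, so $\gamma\in\Gamma^\eta[x]$. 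Consequently $\mathrm{supp}\,\eta\subseteq\bigcup_{x\in\He^1}\Gamma^\eta[x]$, meaning $\eta$ is a MFG equilibrium; together with $m_t=e_t\#\eta$ and the value-function identity, this shows $(u,m)$ is a mild solution in the sense of Definition~\ref{mild}. The delicate point is this verification: $u$ is only Lipschitz, so $D_\cH u$ is defined merely almost everywhere and the chain rule along the trajectories must be justified---using that Theorem~\ref{821} concentrates $\eta$ on \emph{absolutely continuous} solutions of~\eqref{ODE}---so as to match the weak flow of the continuity equation with the optimal synthesis of the control problem.
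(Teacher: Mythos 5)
Your architecture is close to the paper's: part $(ii)$ follows exactly the paper's route (apply Theorem~\ref{821} to the drift of \eqref{eq:MFGintrin}-$(ii)$, then an optimal-synthesis/verification step, which the paper packages as Lemma~\ref{BB} together with Lemma~\ref{valuefunction}); for part $(i)$ you invert the order of the two limit procedures: you perform the fixed point on the viscous coupled system at fixed $\varepsilon$ and then let $\varepsilon\to 0$, whereas the paper first sends the viscosity to zero with the coupling frozen at $\overline m$ (Theorem~\ref{prp:m}, producing a solution of \eqref{continuitye} with bounds uniform in $\overline m$) and then applies Kakutani's theorem at the first-order level, Kakutani rather than Schauder because uniqueness for \eqref{continuitye} fails. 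Your ordering is legitimate in principle; indeed the paper records it as an alternative proof in the closing remark of Section~\ref{sect:dim3.1}.

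There is, however, a genuine gap in your list of uniform estimates: you never establish (nor even mention) the uniform semiconcavity of $u^\varepsilon$, i.e.\ a one-sided bound $\Delta_\cH u^\varepsilon\le C$ independent of $\varepsilon$, and two of your steps fail without it. First, the claim that ``mass conservation together with this bounded drift'' yields a uniform $L^\infty$ bound on the density $m^\varepsilon$ is false: writing \eqref{eq:MFGv}-$(ii)$ as $\partial_t m^\varepsilon-\varepsilon\Delta_\cH m^\varepsilon-D_\cH m^\varepsilon\cdot D_\cH u^\varepsilon-m^\varepsilon\Delta_\cH u^\varepsilon=0$, the $L^\infty$ bound comes from a maximum-principle/Gronwall argument which needs the upper bound on $\Delta_\cH u^\varepsilon$; a bounded drift whose horizontal divergence is not controlled from above does not prevent concentration of mass. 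This is precisely how the paper proceeds: Lemma~\ref{lm:usigma}-$(i)$ gives the semiconcavity constant independent of $\sigma$, and Lemma~\ref{lm:msigma}-$(i)$ (via \cite[Proposition 3.1]{MMM}) then gives $0\le m^\sigma\le C_0$. Second, in the limit $\varepsilon\to0$ you must pass to the limit in the product $m^\varepsilon D_\cH u^\varepsilon$; uniform convergence of $u^\varepsilon$ plus weak-$*$ convergence of $m^\varepsilon$ is not enough, since ``distributional stability'' is not available for a product of two weakly convergent factors. The paper obtains $D_\cH u^\sigma\to D_\cH u$ a.e.\ from the uniform semiconcavity via \cite[Theorem 3.3.3]{CS}, and it is this a.e.\ convergence, combined with the uniform bound on $m^\sigma$, that makes \eqref{eq:MFGintrin}-$(ii)$ survive the limit; the same mechanism is what makes your closed-graph/stability step work. (Semiconcavity, obtained in the paper from the optimal-control representation of the viscous value function rather than from a Bernstein computation, also delivers the horizontal gradient bound you were worried about; note additionally that even the uniform time-equicontinuity of $m^\varepsilon$ is delicate here because $B$ is unbounded on $\He^1$ — the paper handles it through the SDE representation of Lemma~\ref{lm:etaMarkus}.) Finally, a smaller point on $(ii)$: your verification argument needs $u(\cdot,s)$ to be $\cH$-differentiable at $\gamma(s)$ for a.e.\ $s$ and $\eta$-a.e.\ $\gamma$ — this is the hypothesis of Lemma~\ref{BB} — and checking it uses that $m_t$ is absolutely continuous with bounded density (Definition~\ref{defsolmfg}), so that Lebesgue-null sets are $m_t$-null; you correctly flag the issue but stop short of this resolution.
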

The proof of point $(i)$ is standard and we shall provide it only for completeness while the proof of point~$(ii)$ relies on the superposition principle proved in Theorem \ref{821}. The proof of Theorem~\ref{thm:main} is postponed in section~\ref{sect:dim3.1}.
\begin{remark} Differently from \cite{AMMT} and \cite{MMMT}, here we cannot obtain the representation of~$m$ as the push-forward of~$m_0$ by the flow associated with the optimal control problem. This is due to the fact that we cannot prove the uniqueness of the optimal trajectories and then we cannot say that $\Gamma^{\eta}[x]$ is a singleton, or equivalently that the disintegrated measure $\eta_x$ (see \eqref{dis}) coincides with the Dirac measure $\delta_{\overline \gamma_x}$.
\end{remark}

\subsection{The Hamilton-Jacobi equation}\label{OC}
In this section, we tackle the optimal control problem associated with the Hamilton-Jacobi equation~\eqref{eq:MFGintrin}-$(i)$. 
Throughout this section we shall assume the following hypothesis
\begin{hypothesis} \label{BasicAss} $f\in C^{0}([0,T],C^2(\re^3))$ and $g\in C^2(\re^3)$ are $Q_\cH $-periodic w.r.t. $x$ and there exists a constant $C$ such that $\sup_{t\in[0,T]}\|f(\cdot,t)\|_{C^2(\re^3)} + \|g\|_{C^2(\re^3)} \leq C.$
\end{hypothesis}

\begin{definition}\label{def:OCD} We consider the following optimal control problem:
\begin{equation}\label{def:OC}
\text{minimize } J_t(\gamma,\alpha):
=\displaystyle\int_t^T\dfrac12|\alpha(s)|^2+f( \gamma(s),s)\,ds+g(\gamma(T))\quad\textrm{over }(\gamma, \alpha)\in \mathcal A(x,t).
\end{equation}
We say that $\gamma^*$ is an optimal trajectory if there is a control $\alpha^*$ such that $(\gamma^*,\alpha^*)\in  \mathcal A(x,t)$ is optimal for problem~\eqref{def:OC}.
\end{definition}
\begin{remark} \label{2.3} 
For any $(x,t)\in Q_{\cH}\times[0,T)$, we claim that problem~\eqref{def:OC} admits a solution $(\gamma^*,\alpha^*)$ which, moreover, fulfills $\|\alpha^*\|^2_{L^2(t,T)}\leq 2C[(T-t)+1]$ and $\gamma^*\in C^{1/2}([t,T], \He^1)$, where $C$ is the constant introduced in Hypotheses~\ref{BasicAss}. Indeed, for $(x,t)$ fixed, the boundedness of $f$ and $g$ entail that the infimum in~\eqref{def:OC} is bounded from below by $-(\|f\|_\infty T+\|g\|_\infty)$. Moreover using the trajectory $(\bar \gamma(s),\bar \alpha(s))=(x,0)$ for every $s\in[t,T]$, we obtain that this infimum is bounded from above by $\|f\|_\infty T+\|g\|_\infty$. We now consider a minimizing sequence $\{(\gamma_n,\alpha_n)\}_n$. By the last estimate, we get $\|\alpha_n\|^2_{L^2(t,T)}\leq 2(\|f\|_\infty T+\|g\|_\infty)$. Hence, possibly passing to a subsequence (that we still denote $\{(\gamma_n,\alpha_n)\}_n$), we can assume that the sequence~$\{\alpha_n\}_n$ is $L^2(t,T)$-weakly convergent to some function $\alpha\in L^2(t,T)$. By H\"older inequality we infer that $\gamma_n$ belong to $C^{1/2}([t,T], Q_{\cH})$ (see also \cite[Remark 3.1]{MMT} for a similar argument).
\end{remark}
\begin{definition} The value function for the cost $J_t$ defined in \eqref{def:OC} is
\begin{equation}\label{repr}u(x,t):=\inf\left\{ J_t(\gamma, \alpha):\, (\gamma,\alpha)\in \mathcal A(x,t)\right\}.
\end{equation}
An optimal couple $(\gamma^*, \alpha^*)$ for problem~\eqref{def:OC} is also said to be optimal for $u(x,t)$.
\end{definition}
The following lemma permits to restrict our study to $Q_\cH$ because the value function $u$ is $Q_\cH$-periodic in~$x$. 
\begin{lemma}\label{uper}
Let $u$ be the value function introduced in \eqref{repr}. Then $u$ is $Q_\cH $-periodic in~$x$.
\end{lemma}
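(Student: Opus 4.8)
The plan is to exploit the left-invariance of the vector fields $X_1$ and $X_2$ (recorded in \eqref{LIVF}) to set up a bijection between admissible trajectories starting at $x$ and those starting at $n\oplus x$ for any $n\in\Z^3$, under which the cost $J_t$ is preserved. By Lemma~\ref{lemma:period} it suffices to establish $1_\cH$-periodicity, that is $u(n\oplus x,t)=u(x,t)$ for every $n\in\Z^3$, since $Q_\cH$-periodicity is equivalent to $1_\cH$-periodicity.

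The key step is the following observation. Fix $n\in\Z^3$ and $x\in\He^1$, and let $(\gamma,\alpha)\in\mathcal A(x,t)$ be an admissible pair solving the dynamics \eqref{DYNH}, i.e. $\gamma'(s)=\alpha(s)B^T(\gamma(s))$ with $\gamma(t)=x$. Define the left-translated curve $\tilde\gamma(s):=n\oplus\gamma(s)$, so that $\tilde\gamma(t)=n\oplus x$. The left-invariance \eqref{LIVF} of the vector fields gives $X_i(\tilde\gamma(s))=X_i(n\oplus\gamma(s))$ in the sense that the horizontal dynamics is preserved under the group translation; concretely, differentiating $\tilde\gamma(s)=n\oplus\gamma(s)$ through the explicit group law \eqref{Group_Law} and using that $n$ is constant, one checks that $\tilde\gamma'(s)=\alpha(s)B^T(\tilde\gamma(s))$ with the same control $\alpha$. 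Hence $(\tilde\gamma,\alpha)\in\mathcal A(n\oplus x,t)$, and the correspondence $(\gamma,\alpha)\mapsto(\tilde\gamma,\alpha)$ is a bijection between $\mathcal A(x,t)$ and $\mathcal A(n\oplus x,t)$ with inverse given by translation by $n^{-1}$.

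It remains to check that this bijection leaves the cost $J_t$ invariant. Since $\alpha$ is unchanged, the kinetic term $\frac12\int_t^T|\alpha(s)|^2ds$ is identical for the two pairs. For the running and terminal costs, the $Q_\cH$-periodicity of $f$ and $g$ (Hypothesis~\ref{BasicAss}), combined with the $1_\cH$-periodicity guaranteed by Lemma~\ref{lemma:period}, yields $f(\tilde\gamma(s),s)=f(n\oplus\gamma(s),s)=f(\gamma(s),s)$ and $g(\tilde\gamma(T))=g(n\oplus\gamma(T))=g(\gamma(T))$. Therefore $J_t(\tilde\gamma,\alpha)=J_t(\gamma,\alpha)$. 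Taking the infimum over the two bijectively identified admissible classes gives $u(n\oplus x,t)=u(x,t)$ for every $n\in\Z^3$, which is exactly $1_\cH$-periodicity; by Lemma~\ref{lemma:period} this is equivalent to $Q_\cH$-periodicity.

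The main obstacle, which is only mildly technical, is verifying that the group translation $\tilde\gamma=n\oplus\gamma$ genuinely preserves the horizontal dynamics \eqref{DYNH} with the same control. This reduces to the left-invariance property \eqref{LIVF} of $X_1$ and $X_2$, and can be confirmed by a direct computation with the explicit law \eqref{Group_Law}: writing out the third component $\tilde\gamma_3(s)=\gamma_3(s)+n_3-n_2\gamma_1(s)+n_1\gamma_2(s)$ and differentiating shows the drift term $-\tilde\gamma_2\alpha_1+\tilde\gamma_1\alpha_2$ matches $\tilde\gamma_3'$ precisely because the translation components $n_1,n_2,n_3$ enter through the affine structure of $B$. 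Once this invariance is in hand, the periodicity of the cost functional does the rest.
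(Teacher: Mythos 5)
Your proof is correct and follows essentially the same route as the paper: both verify, via a direct computation with the group law \eqref{Group_Law}, that left translation by $n\in\Z^3$ maps solutions of \eqref{DYNH} to solutions with the same control, and then use the $Q_\cH$-periodicity (equivalently, $1_\cH$-periodicity) of $f$ and $g$ to conclude the costs, hence the infima, coincide. Your explicit framing as a cost-preserving bijection between $\mathcal A(x,t)$ and $\mathcal A(n\oplus x,t)$ is just a slightly more formal packaging of the paper's argument.
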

\begin{proof}
Note that if $x(s)$ and $y(s)$ solves \eqref{DYNH} with the same law of control $\alpha$ and with respectively $x(t)=x$ and $y(t)=z\oplus x$, then 
$y(s)= z\oplus x(s)$; actually there hold
\begin{eqnarray*}
y_i(s) &=& z_i+x_i+\int_t^s\alpha_i(\tau)d\tau= z_i+ x_i(s),\qquad \textrm{for } i=1,2,\\
y_3(s) &=& z_3+x_3-z_2x_1+z_1x_2+\int_t^s(z_2+ x_2(\tau))(-\alpha_1(\tau))+ (z_1+ x_1(\tau))\alpha_2(\tau)d\tau\\
&=&z_3+\left(x_3- \int_t^sx_2(\tau)\alpha_1(\tau)- x_1(\tau)\alpha_2(\tau)d\tau\right)-z_2\left(x_1+\int_t^s\alpha_1(\tau)d\tau\right)\\
&&+z_1\left(x_2+\int_t^s\alpha_2(\tau)d\tau\right)\\
&=&z_3 +x_3(s)-z_2 x_1(s)+z_1 x_2(s).
\end{eqnarray*}
Taking advantage of the $Q_\cH$-periodicity of~$f$ and~$g$, for any $z\in\Z^3$, we deduce
\begin{eqnarray*}
u(z\oplus x,t)
&=&\inf_{\alpha}\displaystyle\int_t^T\dfrac12|\alpha(s)|^2+f(z\oplus x(s),s)\,ds+g(z\oplus x(T))\\
&=&
\inf_{\alpha}\displaystyle\int_t^T\dfrac12|\alpha(s)|^2+f(x(s),s)\,ds+g(x(T))= u(x,t)
\end{eqnarray*}
where the infimum is taken among all the possible controls~$\alpha$; hence, the value function is $Q_\cH$-periodic.
\end{proof}

The following proposition permits to restrict our study on uniformly bounded controls. We shall omit its proof because it follows the same arguments of the proof of \cite[Proposition 3.1]{MMT}, using the fact that $Q_\cH$ is a bounded set.
\begin{proposition}\label{boundalfa}
  Let $u$ be the value function introduced in \eqref{repr}. Then, there exists a constant $C_2$ (depending only on $T$ and on the constant $C$ of Hypothesis \ref{BasicAss}) such that: $u(x,t)=\inf \{J_t(\gamma, \alpha):\ (\gamma, \alpha) \in\mathcal A(x,t),\ \|\alpha\|_{\infty}\leq C_2\}$ for any $(x,t)\in Q_\cH\times[0,T]$.
\end{proposition}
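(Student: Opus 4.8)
The plan is to prove the stronger fact that \emph{every} optimal control for~\eqref{repr} is essentially bounded by a constant $C_2=C_2(T,C)$, after which the equality of the two infima is immediate. By Lemma~\ref{uper} and $Q_\cH$-periodicity it suffices to argue for $x\in Q_\cH$. Fix such an $x$ and, by Remark~\ref{2.3}, an optimal pair $(\gamma^*,\alpha^*)\in\mathcal A(x,t)$; Remark~\ref{2.3} also supplies the a priori energy bound $\|\alpha^*\|_{L^2(t,T)}^2\le 2C[(T-t)+1]$. Writing \eqref{DYNH} componentwise as in Lemma~\ref{uper} and applying Cauchy--Schwarz, this $L^2$ bound forces $\gamma^*$ to remain in a bounded set $K=K(T,C)\subset\He^1$ (first the horizontal components $\gamma^*_1,\gamma^*_2$, then $\gamma^*_3$). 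This confinement is precisely where the boundedness of $Q_\cH$ is used: although the coefficients of $X_1,X_2$ grow at infinity, on $K$ the matrix $B$ is bounded, so $|D_\cH f|=|Df\,B|\le C\,c(K)$ and similarly for $g$; hence $f(\cdot,s)$ and $g$ are Lipschitz continuous with respect to $d_\cH$ (equivalently, the sub-Riemannian distance) on $K$, with a constant $L=L(T,C)$.

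The core step is a uniform Lipschitz estimate for the value function: there exists $\Lambda=\Lambda(T,C)$ with $|u(y,r)-u(y',r)|\le \Lambda\, d_\cH(y,y')$ for all $r\in[t,T]$ and all $y,y'$ in a neighbourhood of $K$. I would obtain it by a competitor argument. Let $(\bar\gamma,\bar\alpha)$ be optimal from $(y,r)$; to estimate $u(y',r)$ I prepend to $\bar\gamma$ a short constant-speed horizontal geodesic joining $y'$ to $y$ on an interval of length $\tau$, of energy $d_\cH(y,y')^2/(2\tau)$, and then follow $\bar\gamma$ after a linear time-reparametrization compressing $[r,T]$ into $[r+\tau,T]$. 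The compression multiplies the kinetic energy by $1+O(\tau/(T-r))$ and shifts the times at which $f$ is evaluated by at most $\tau$; using the energy bound, the boundedness of $f,g$ and the regularity of $f$, the total excess cost is $O\!\left(d_\cH(y,y')^2/\tau+\tau\right)$. Choosing $\tau\sim d_\cH(y,y')$ and symmetrizing in $y,y'$ yields the Lipschitz bound, and the constant stays uniform up to $r=T$ because, by the same energy bound, only pairs with $d_\cH(y,y')\le T-r$ need to be controlled in the application below.

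Finally I would combine this with the dynamic programming principle. For a.e. Lebesgue point $s_0\in(t,T)$ of $|\alpha^*|^2$ and small $h>0$, comparing the optimal trajectory on $[s_0,s_0+h]$ with the competitor that stays at $\gamma^*(s_0)$ on $[s_0,s_0+h]$ and is optimal afterwards gives
\begin{equation*}
\int_{s_0}^{s_0+h}\tfrac12|\alpha^*|^2\,ds\le \int_{s_0}^{s_0+h}\big[f(\gamma^*(s_0),s)-f(\gamma^*(s),s)\big]\,ds+\big[u(\gamma^*(s_0),s_0+h)-u(\gamma^*(s_0+h),s_0+h)\big].
\end{equation*}
Bounding the first bracket by the Lipschitz property of $f$ (which contributes an extra factor $h$) and the second by $\Lambda\,d_\cH(\gamma^*(s_0),\gamma^*(s_0+h))$, and noting that the sub-Riemannian length $\int_{s_0}^{s_0+h}|\alpha^*|$ controls $d_\cH(\gamma^*(s_0),\gamma^*(s_0+h))$ up to a fixed constant, I divide by $h$ and let $h\to0^+$ to obtain $\tfrac12|\alpha^*(s_0)|^2\le \Lambda\,|\alpha^*(s_0)|$, i.e. $\|\alpha^*\|_\infty\le 2\Lambda=:C_2$. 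Since an optimal control exists and is thus admissible in the restricted infimum, the two infima coincide. The main obstacle is the Lipschitz estimate for $u$ in the second step: unlike in the Euclidean case it must be measured with respect to $d_\cH$, it requires the a priori confinement of trajectories to tame the unbounded drift, and the time-reparametrization in the competitor has to be controlled carefully; this is exactly where the argument of \cite[Proposition~3.1]{MMT} and the boundedness of $Q_\cH$ intervene.
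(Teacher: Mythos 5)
Your overall architecture --- (1) confinement of optimal trajectories via the energy bound of Remark~\ref{2.3}, (2) a uniform local Lipschitz estimate for the value function $u$ with respect to $d_\cH$, (3) the dynamic programming inequality at Lebesgue points --- is viable, and steps (1) and (3) are essentially correct. The genuine gap is in step (2), which is the heart of your argument. The competitor you propose (prepend a geodesic of duration $\tau$, then follow the optimal curve $\bar\gamma$ after compressing $[r,T]$ into $[r+\tau,T]$) cannot deliver a Lipschitz bound under the paper's hypotheses, for two distinct reasons. First, Hypothesis~\ref{BasicAss} only gives $f\in C^0([0,T],C^2(\re^3))$, i.e.\ mere continuity in time: the compression shifts the time argument of $f$ by up to $\tau$, so the excess cost contains a term of order $(T-r)\,\omega_f(\tau)$, where $\omega_f$ is the modulus of continuity of $f$ in time. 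This is not $O(\tau)$, so your claimed excess $O(d_\cH^2/\tau+\tau)$ is unjustified, and optimizing in $\tau$ yields only \emph{some} modulus of continuity for $u$, not Lipschitz continuity. (This is not a removable technicality: in the MFG application $f=F[\overline m_t]$ is only $1/4$-H\"older in time.) Second, even if $f$ were time-independent, the kinetic-energy inflation caused by the compression equals $(\phi'-1)\int_r^T\frac12|\bar\alpha|^2\,ds$ with $\phi'-1\approx\tau/(T-r)$, and by Remark~\ref{2.3} the energy $\int_r^T\frac12|\bar\alpha|^2\,ds\le C[(T-r)+1]$ does \emph{not} vanish as $r\to T$ (the ``$+1$'' comes from $\|g\|_\infty$). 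Hence the Lipschitz constant produced by this argument degenerates like $1/(T-r)$ near the terminal time; your caveat that ``only pairs with $d_\cH(y,y')\le T-r$ matter'' caps that term by a constant, not by $\Lambda\, d_\cH(y,y')$. Feeding this into step (3) gives $|\alpha^*(s_0)|\le C/(T-s_0)$, which blows up as $s_0\to T$, whereas the Proposition requires a constant $C_2$ depending only on $T$ and $C$.

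The estimate you need is true and follows from a different competitor that removes both problems at once: left translation. Given $y'=z\oplus y$, run from $(y',r)$ the \emph{same} control $\bar\alpha$ that is optimal from $(y,r)$; by the computation already carried out in Lemma~\ref{uper}, the resulting trajectory is $z\oplus\bar\gamma(\cdot)$, so there is no extra kinetic energy and no time shift, and $u(y',r)-u(y,r)\le \int_r^T|f(z\oplus\bar\gamma(s),s)-f(\bar\gamma(s),s)|\,ds+|g(z\oplus\bar\gamma(T))-g(\bar\gamma(T))|\le C(T+1)\sup_{w\in K}|z\oplus w-w|\le C_K|z|\le C'_K\,d_\cH(y,y')$ for $y,y'$ in the compact set $K$, with a constant uniform up to $r=T$ (here one uses $|z\oplus w-w|\le|z|(1+|w|)$ and the comparability of $|z|$ with $d_\cH(y,y')$ on bounded sets). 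With step (2) so replaced, your steps (1) and (3) do prove the Proposition. Note that even after this repair your route differs from the paper's: the paper reduces to \cite[Proposition 3.1]{MMT} (using the boundedness of $Q_\cH$), where the $L^\infty$ bound on optimal controls is obtained by working directly on optimal trajectories rather than through regularity of the value function; consistently, the Lipschitz regularity of $u$ (Lemma~\ref{valuefunction}) is stated in the paper only \emph{after} Proposition~\ref{boundalfa} and relies on it. Any proof in your direction --- regularity of $u$ first --- must, as the translation argument does, avoid using the restriction to bounded controls, otherwise it becomes circular.
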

We now study the Hamilton-Jacobi equation associated with the problem of Definition~\ref{def:OCD}: 
\begin{equation}\label{eq:HJ1}
-\partial_t u+\frac{|D_{\mathcal H}u|^2}{2}=f(x, t)\quad \textrm{in }\He^1\times (0,T),\qquad u(x,T)=g(x)\quad \textrm{on }\He^1.
\end{equation}
From Lemma \ref{uper} we can restrict to study equation~\eqref{eq:HJ1} in $\Te$.
Following the procedure used for the unbounded case, see \cite[Section 3.2]{MMT} with $\varepsilon=0$, we can prove:
\begin{lemma}\label{valuefunction}
The value function $u$, defined in~\eqref{repr}, is the unique continuous bounded viscosity solution to problem~\eqref{eq:HJ1}. Moreover $u$ is $Q_{\cH}$-periodic, Lipschitz continuous w.r.t. $x$ and $t$, semiconcave w.r.t.~$x$ in~$Q_\cH$.
\end{lemma}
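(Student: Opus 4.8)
The plan is to follow the classical dynamic programming approach for the optimal control problem of Definition~\ref{def:OCD}, transplanted to the sub-Riemannian geometry of $\He^1$ and then projected onto the compact torus $\Te$ via Lemma~\ref{uper}. First I would establish the Dynamic Programming Principle: for every $t\le s\le T$,
$$
u(x,t)=\inf_{(\gamma,\alpha)\in\mathcal A(x,t)}\left\{\int_t^s\Big[\frac12|\alpha(\tau)|^2+f(\gamma(\tau),\tau)\Big]d\tau+u(\gamma(s),s)\right\},
$$
which follows from the definition by concatenating admissible horizontal curves solving~\eqref{DYNH}. From the DPP, choosing smooth test functions touching $u$ from above and below and optimizing over the (bounded, by Proposition~\ref{boundalfa}) controls, I would derive that $u$ is a viscosity sub- and supersolution of~\eqref{eq:HJ1}; the Legendre transform of the quadratic running cost $\frac12|\alpha|^2$ against the horizontal drift $\alpha B^T$ produces exactly the Hamiltonian $\frac12|D_\cH u|^2$, since $\langle\alpha B^T,Du\rangle=\alpha\cdot D_\cH u$ and the minimizing control is $\alpha=-D_\cH u$. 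Boundedness of $u$ is immediate from the two-sided estimate recorded in Remark~\ref{2.3}.

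For the regularity, the crucial structural observation is that, for a \emph{fixed} control $\alpha$, the map $x\mapsto\gamma(s;x)$ solving~\eqref{DYNH} is affine in $x$: the horizontal components satisfy $\gamma_i(s)=x_i+\int_t^s\alpha_i$ for $i=1,2$, and substituting these into $\gamma_3'=-\gamma_2\alpha_1+\gamma_1\alpha_2$ gives $\gamma_3(s)=x_3-\big(\int_t^s\alpha_1\big)x_2+\big(\int_t^s\alpha_2\big)x_1+C(s)$, again affine in $(x_1,x_2,x_3)$, with linear part bounded on $[t,T]$ once $\|\alpha\|_\infty$ is controlled by Proposition~\ref{boundalfa}. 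Consequently $x\mapsto J_t(\gamma(\cdot;x),\alpha)$ is a $C^2$ function of $x$ (the term $\frac12|\alpha|^2$ being independent of $x$) whose first and second $x$-derivatives are bounded, uniformly over the bounded controls, by $\|f\|_{C^2}$ and $\|g\|_{C^2}$ of Hypothesis~\ref{BasicAss}, using that $\gamma$ affine forces $D^2_x\gamma=0$. Since $u$ is the infimum of this uniformly Lipschitz and uniformly semiconcave family, $u$ is Lipschitz and semiconcave w.r.t. $x$ on $Q_\cH$. Lipschitz continuity in $t$ then follows from the DPP by propagating with a bounded control over a short interval, estimating $\gamma(s)$ at distance $O(|s-t|)$ from $x$ and invoking the spatial Lipschitz bound just obtained; the $Q_\cH$-periodicity is already granted by Lemma~\ref{uper}.

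Finally, uniqueness reduces to a comparison principle for~\eqref{eq:HJ1} among bounded continuous viscosity solutions on the compact torus $\Te$. I expect this to be the main obstacle, and it is caused precisely by the noncoercivity of $H(x,p)=\frac12|pB(x)|^2$ in $p$: the equation controls only the horizontal component $pB$ of the gradient and gives no a priori bound on the vertical direction $\partial_{x_3}$. A naive doubling-of-variables with Euclidean penalization $|x-y|^2/(2\varepsilon)$ does not close, because the only structural estimate available, $|H(x,p)-H(y,p)|\le C|x-y|(1+|p|^2)$ on $Q_\cH$, is too weak to absorb the blow-up of the penalization gradient.

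I would circumvent this exactly as for the vanishing-viscosity limit $\varepsilon=0$ in~\cite[Section 3.2]{MMT}: replace the Euclidean penalization by one adapted to the left-invariant structure of $\He^1$ and to the distance $d_\cH$, so that the degeneracy in the vertical direction is compensated by the recovery of $\partial_{x_3}$ through the bracket $[X_1,X_2]=2\partial_{x_3}$ (Chow's condition). Combined with the a priori Lipschitz and semiconcavity bounds established above, which confine the doubling argument to the regime of bounded gradients, this yields the comparison inequality and hence identifies $u$ as the unique continuous bounded viscosity solution of~\eqref{eq:HJ1}.
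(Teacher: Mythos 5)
Your proposal is correct and follows essentially the same route as the paper, whose entire proof is the one-line citation to \cite[Section 3.2]{MMT} with $\varepsilon=0$: dynamic programming for the value function, Lipschitz continuity and semiconcavity obtained from the affine dependence of the horizontal trajectories on the initial point (with controls restricted via Proposition~\ref{boundalfa}), and uniqueness by a comparison argument taken from that same reference. One minor inaccuracy in your commentary: the claim that naive Euclidean doubling must fail is overstated, since once $u$ is known to be Lipschitz and the problem is reduced by periodicity to the compact torus, the penalization gradient at the maximum point is bounded by the Lipschitz constant of $u$, so the estimate $|H(x,p)-H(y,p)|\le C|x-y|\,(1+|p|^2)$ does close the standard argument --- but this does not affect the validity of your route.
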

Now we want to state our optimal synthesis result. To this end, we need the notion of {\it $\cH$-differentiability} which is the differentiability following horizontal lines, extending the notion of Euclidean differentials (see~\cite[section 3.1]{CS}) to Heisenberg group; for the precise definition and main properties, we refer the reader to~\cite[Appendix A]{MMTa}.
\begin{lemma}\label{BB}
For $(x,t)\in\He^1\times (0,T)$, let $x(\cdot)$ be an AC function such that~$x(t)=x\in\He^1$
and for almost every $s\in (t,T)$,
$u(\cdot,s)$ is $\cH$-differentiable at $x(s)$.
Assume
\begin{equation}\label{OS}
x'(s)=-D_\cH u(x(s),s)B^T(x(s)), \qquad\textrm{a.e. }s\in(t,T).
\end{equation}
Then the control law $\alpha(s)$, given by 
$\alpha(s)=-D_\cH u(x(s),s)$ 
is optimal for $u(x,t)$.
\end{lemma}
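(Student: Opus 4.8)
We are to prove that the feedback law $\alpha(s)=-D_\cH u(x(s),s)$, with $x(\cdot)$ solving the closed-loop ODE~\eqref{OS}, is optimal for the value function $u(x,t)$. The natural strategy is the classical verification argument from dynamic programming, adapted to the horizontal structure. The plan is to show that along the trajectory $x(\cdot)$ the map $s\mapsto u(x(s),s)$ is absolutely continuous and that its derivative equals exactly the running cost $\frac12|\alpha(s)|^2+f(x(s),s)$; integrating from $t$ to $T$ and using the terminal condition $u(\cdot,T)=g$ from Lemma~\ref{valuefunction} then yields $u(x,t)=J_t(x(\cdot),\alpha(\cdot))$, so $(x,\alpha)$ realizes the infimum defining $u$.

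**Key steps.**
First I would establish the chain rule along the trajectory. Since $u$ is Lipschitz in $x$ and $t$ (Lemma~\ref{valuefunction}) and $x(\cdot)$ is AC with bounded derivative, the composition $s\mapsto u(x(s),s)$ is AC, so for a.e.\ $s$ one may differentiate. At any $s$ where $u(\cdot,s)$ is $\cH$-differentiable at $x(s)$ --- which holds for a.e.\ $s$ by hypothesis --- the horizontal chain rule gives
\begin{equation*}
\frac{d}{ds}u(x(s),s)=\partial_t u(x(s),s)+\langle D_\cH u(x(s),s), \alpha(s)\rangle,
\end{equation*}
using $x'(s)=\alpha(s)B^T(x(s))$ and $D_\cH u\cdot\alpha=\langle Du\,B,\alpha\rangle=\langle Du,\alpha B^T\rangle$. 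Second, I would substitute the feedback $\alpha(s)=-D_\cH u(x(s),s)$, so that $\langle D_\cH u,\alpha\rangle=-|D_\cH u|^2=-|\alpha|^2$, and then invoke the Hamilton-Jacobi equation~\eqref{eq:HJ1}, which on the set of $\cH$-differentiability reads $\partial_t u=\frac{|D_\cH u|^2}{2}-f=\frac12|\alpha|^2-f$. Combining these two identities gives
\begin{equation*}
\frac{d}{ds}u(x(s),s)=\tfrac12|\alpha(s)|^2-f(x(s),s)-|\alpha(s)|^2=-\left(\tfrac12|\alpha(s)|^2+f(x(s),s)\right),
\end{equation*}
which is the negative of the integrand of $J_t$. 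Third, integrating over $[t,T]$ and using $u(x(T),T)=g(x(T))$ gives precisely $u(x,t)=\int_t^T\left(\frac12|\alpha|^2+f\right)ds+g(x(T))=J_t(x(\cdot),\alpha(\cdot))$. Since $u(x,t)\le J_t(\gamma,\beta)$ for every admissible pair by definition~\eqref{repr}, equality forces optimality.

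**Main obstacle.**
The delicate point is the rigorous justification of the differentiation identity at the HJ equation. Because $u$ solves~\eqref{eq:HJ1} only in the viscosity sense and is merely Lipschitz and semiconcave (not $C^1$), the pointwise relation $\partial_t u=\frac12|D_\cH u|^2-f$ is not automatic; one must argue that at points where $u(\cdot,s)$ is $\cH$-differentiable the viscosity solution satisfies the equation with the $\cH$-gradient, and that the time derivative appearing in the AC chain rule coincides with the relevant partial derivative. The semiconcavity from Lemma~\ref{valuefunction} together with the theory of $\cH$-differentials developed in~\cite[Appendix A]{MMTa} is exactly what makes this reconciliation possible: semiconcave viscosity solutions are differentiable along optimal directions and satisfy the equation there. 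I would therefore lean on those properties to guarantee that, for a.e.\ $s$, both the chain rule and the pointwise equation hold simultaneously at $x(s)$, thereby closing the argument.
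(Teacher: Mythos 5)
Your proposal is correct and follows essentially the same route as the paper: the paper's proof consists of checking that $x(\cdot)$ is Lipschitz (via the boundedness of $D_\cH u$ from Lemma~\ref{valuefunction} and the structure of $B^T$) and then invoking exactly the verification argument you outline, as carried out in \cite[Lemma 3.6]{MMMT} and \cite[Lemma 4.11]{C}, where semiconcavity is precisely what justifies the chain rule and the pointwise validity of the equation at points of $\cH$-differentiability. The only step you gloss that the paper makes explicit is that the boundedness of $x_1,x_2$ (needed before one can say $x'$ is bounded, since $B^T(x)$ contains $-x_2,x_1$) must be derived first from the boundedness of $D_\cH u$.
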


\begin{proof}
We adapt the arguments of \cite[Lemma 3.6]{MMMT} and \cite[Lemma 4.11]{C}. 
Consider a function~$x(\cdot)$ as in the statement; note that this implies that $D_\cH u$ exists at $(x(s),s)$ for a.e. $s\in (t,T)$. 
We claim that $x(\cdot)$ is Lipschitz continuous. Indeed equation~\eqref{OS} reads
\begin{equation*}
x_i'(s)=-X_iu(x(s),s) \quad\textrm{for $i=1,2$},\quad x_3'(s)= x_2(s)X_1u(x(s),s)-x_1(s)X_2u(x(s),s)
\end{equation*}
By Lemma~\ref{valuefunction}, $D_\cH u$ is bounded; hence, $x_1(\cdot)$ and $x_2(\cdot)$ are Lipschitz continuous and, in particular, bounded. We infer that also $x_3(\cdot)$ is Lipschitz continuous. The claim is proved. 
The rest of the proof follows the arguments of \cite{MMMT,C} so we omit it.
\end{proof}

\subsection{The continuity equation}\label{sect:continuity}
Throughout this section we assume $(H1)$-$(H3)$ and we study equation~\eqref{eq:MFGintrin}-$(ii)$, namely
\begin{equation}\label{continuitye}
\partial_t m-\diver_{\cH}  (m D_{\cH}u)=0 \quad \textrm{in }\He^1\times (0,T),\quad m(x,0)=m_0(x) \quad \textrm{on }\He^1,
\end{equation}
where, for $\overline m$ fixed in $C^{1/4}([0,T],\mathcal P_{per}(\He^1))$, $u$ solves
\begin{equation}\label{HJe}
-\partial_t u+\frac{|D_{\cH}u|^2}{2}=F[\overline{m}_t](x) \quad \textrm{in }\He^1\times (0,T), \qquad u(x,T)=G[\overline {m}_T](x)\quad \textrm{on }\He^1.
\end{equation}
Let us observe that assumptions $(H1)$-$(H3)$ and Lemma~\ref{valuefunction} ensure that there is a unique bounded solution~$u$ to~\eqref{HJe} which is moreover~$Q_\cH$-periodic and Lipschitz continuous.
\begin{theorem}\label{prp:m}
Under assumptions (H1)-(H3), for any $\overline m\in C^{1/4}([0,T],\mathcal P_{per}(\He^1))$, problem~\eqref{continuitye} has a solution $m$ 
in the sense of Definition~\ref{defsolmfg}. Moreover the function $m$ belongs to $C^{1/4}([0,T],\mathcal P_{per}(\He^1))\cap {\mathbb L}^\infty(\He^1\times (0,T))$ and there exist two positive constants $C_0$ and $C_1$   (both independent of~$\overline m$) such that
\begin{equation}\label{stimaC0}
  0\leq m(x,t)\leq C_0 \qquad \forall (x,t)\in\He^1\times [0,T],
\end{equation}
\begin{equation}\label{stima0.5}
{\bf d_{1}}(m_s,m_t)\leq C_1(t-s)^{1/4} \qquad \forall\ 0\leq s\leq t\leq T. 
\end{equation}
\end{theorem}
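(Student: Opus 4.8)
The plan is to construct $m$ by a vanishing viscosity approximation built on the horizontal Laplacian, so that $Q_\cH$-periodicity is preserved, and then to read off the two quantitative estimates from uniform bounds on the approximants. First I would fix $\overline m$ together with the associated solution $u$ of~\eqref{HJe}; by Lemma~\ref{valuefunction}, $u$ is $Q_\cH$-periodic and Lipschitz in $x$, and since the $X_i$ are left invariant the field $v:=D_\cH u$ is itself $Q_\cH$-periodic and bounded, with $\|v\|_\infty$ controlled only through the Lipschitz constant of $u$, hence through the data bound $C$ of~(H2). This is the mechanism that makes all constants below independent of~$\overline m$. Thus \eqref{continuitye} is exactly the continuity equation~\eqref{cont} with this admissible drift, and Assumptions~$(H)$ hold. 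For $\varepsilon>0$ I would introduce the mollified potential $u^\varepsilon$ (regularizing $F[\overline m]$, $G[\overline m]$ and solving the viscous Hamilton--Jacobi equation with $\varepsilon\Delta_\cH$) and solve the linear problem
\begin{equation*}
\partial_t m^\varepsilon-\varepsilon\Delta_\cH m^\varepsilon-\diver_\cH(m^\varepsilon D_\cH u^\varepsilon)=0\ \textrm{ in }\Te\times(0,T),\qquad m^\varepsilon(\cdot,0)=m_0\ast\rho_\varepsilon;
\end{equation*}
since $\varepsilon\Delta_\cH$ is hypoelliptic (the $X_i$ and $[X_1,X_2]$ span $\re^3$) and the coefficients are smooth and periodic, this admits a unique smooth $Q_\cH$-periodic solution, which is strictly positive by the strong maximum principle (Proposition~\ref{propconvH} gives $m_0\ast\rho_\varepsilon>0$).

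The core of the proof is the uniform $L^\infty$ bound. Expanding the divergence, $m^\varepsilon$ solves
\begin{equation*}
\partial_t m^\varepsilon-\varepsilon\Delta_\cH m^\varepsilon-D_\cH u^\varepsilon\cdot D_\cH m^\varepsilon=m^\varepsilon\,\Delta_\cH u^\varepsilon .
\end{equation*}
If one disposes of a semiconcavity bound $\Delta_\cH u^\varepsilon\le C$ uniform in $\varepsilon$ (the Heisenberg analogue of the estimate behind Lemma~\ref{valuefunction}), then $w:=e^{-Ct}m^\varepsilon$ satisfies $\partial_t w-\varepsilon\Delta_\cH w-D_\cH u^\varepsilon\cdot D_\cH w=e^{-Ct}m^\varepsilon(\Delta_\cH u^\varepsilon-C)\le0$, i.e. $w$ is a subsolution of the same operator with no zeroth order term. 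The maximum principle on the compact torus then gives $\max_x w(\cdot,t)\le\max_x w(\cdot,0)$, whence $0\le m^\varepsilon(x,t)\le e^{CT}\|m_0\|_\infty=:C_0$, uniformly in $\varepsilon$. I expect this uniform horizontal semiconcavity estimate for the viscous Hamilton--Jacobi equation to be the main obstacle, since it is where the sub-Riemannian geometry genuinely enters; I would establish it adapting the arguments of~\cite{MMT,MMMT} to the $\varepsilon\Delta_\cH$ regularization.

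For the time regularity I would use Kantorovich--Rubinstein duality. Testing the weak formulation~\eqref{814nostraQH} against functions $\zeta$ that are $1$-Lipschitz for $d_\Te$ — smoothed by the convolution of Proposition~\ref{propconvH}, which commutes with the $X_i$ and so keeps $|D_\cH\zeta|$ bounded by a structural constant — yields $\bigl|\frac{d}{dt}\int_\Te\zeta\,dm_t\bigr|\le C\|D_\cH u\|_\infty$, hence ${\bf d_1}(m_s,m_t)\le C\|D_\cH u\|_\infty(t-s)\le C_1(t-s)^{1/4}$ on $[0,T]$, with $C_1$ again governed only by $\|D_\cH u\|_\infty$, thus independent of $\overline m$. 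A uniform-in-$\varepsilon$ version of this estimate (the viscous term contributing only an $O(\sqrt\varepsilon)$ correction, hence a uniform modulus of continuity) provides equicontinuity of $(m^\varepsilon)$ in $C^0([0,T],{\mathcal P}(\Te))$.

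Finally, combining this equicontinuity with the compactness of ${\mathcal P}(\Te)$, Ascoli--Arzel\`a gives a subsequence $m^\varepsilon\to m$ uniformly. I would then pass to the limit in the weak formulation: the term $\varepsilon\Delta_\cH m^\varepsilon\to0$ because $m^\varepsilon$ is uniformly bounded, while the convergences $D_\cH u^\varepsilon\to D_\cH u$ and $m^\varepsilon\to m$ identify the drift term, so that $m$ solves~\eqref{continuitye} in the sense of Definition~\ref{defsolmfg}; the bounds~\eqref{stimaC0} and~\eqref{stima0.5} pass to the limit, and $m\in C^{1/4}([0,T],{\mathcal P}_{per}(\He^1))\cap L^\infty(\He^1\times(0,T))$. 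As a byproduct, applying Theorem~\ref{821} to this limit identifies $m_t$ with the push-forward $e_t\#\eta$ of a measure concentrated on the solutions of the characteristic system, which is the representation exploited later in the section.
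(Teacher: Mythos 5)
Your proposal follows the same skeleton as the paper's proof --- vanishing viscosity with the horizontal Laplacian $\Delta_\cH$ to preserve $Q_\cH$-periodicity, a uniform $L^\infty$ bound obtained from the maximum principle together with the uniform semiconcavity bound $\Delta_\cH u^\varepsilon\le C$, then compactness and passage to the limit using a.e.\ convergence of the horizontal gradients --- but it replaces the paper's key device for the time-regularity estimate~\eqref{stima0.5}. The paper proves this estimate probabilistically: it represents $m^\sigma$ as the law of the SDE~\eqref{processMarkus} (Lemma~\ref{lm:etaMarkus}) and bounds ${\bf d_{1}}(\eta^\sigma_t,\eta^\sigma_s)$ by It\^o calculus, the exponent $1/4$ arising from the Brownian term and from $d_\cH(x,y)\le |x-y|+(1+|x_1|^{1/2}+|x_2|^{1/2})|x-y|^{1/2}$; the same lemma is also what the paper uses to identify $m^\sigma_t$ as a probability measure. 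You instead use Kantorovich--Rubinstein duality on the compact torus, which is purely deterministic, gets the normalization of $m^\varepsilon_t$ from mass conservation, and even yields an essentially Lipschitz-in-time modulus of which $(t-s)^{1/4}$ is a by-product; this is a legitimate and arguably more elementary route. Two points need care if you write it up: (a) you must invoke the sub-Riemannian fact that $\operatorname{Lip}_{d_{\Te}}(\zeta)\le 1$ implies $|D_\cH\zeta|\le C$ a.e., and handle the group mollification on the correct side --- $X_i(\zeta\ast\rho_\delta)=(X_i\zeta)\ast\rho_\delta$ controls first derivatives, while second derivatives require commuting the convolution (Proposition~\ref{propconvH}-(i)) so that one derivative falls on the kernel, giving $O(1/\delta)$; moreover, since mollification acts by group translations, the error $\|\zeta-\zeta\ast\rho_\delta\|_\infty$ for periodic Lipschitz $\zeta$ is only $O(\sqrt\delta)$, so the viscous correction is not literally $O(\sqrt\varepsilon)$, though any rate vanishing with $\varepsilon$ still gives the uniform $1/4$-H\"older bound after optimizing in $\delta$; (b) in the limit of the drift term you should state explicitly that uniform semiconcavity yields $D_\cH u^\varepsilon\to D_\cH u$ a.e.\ (as in \cite[Theorem 3.3.3]{CS}), which combined with $L^\infty$-weak-$*$ convergence of $m^\varepsilon$ identifies the product, exactly as the paper does. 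The hardest uniform estimates on $u^\varepsilon$ you defer to adaptations of \cite{MMT,MMMT}, which matches the paper's own treatment in Lemma~\ref{lm:usigma}.
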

\begin{proof}
Since the proof is inspired by the proof of~\cite[Theorem 4.1]{MMT}, here we only sketch the main differences.\\
\noindent 1.
We use a vanishing viscosity approach applied to the {\it whole} MFG system in terms of the {\it horizontal Laplacian}~$\Delta_\cH$. We need such approximation to ensure that the corresponding solution is still $Q_\cH$-periodic in~$x$.
More precisely, for any $\sigma \in(0,1)$, we consider the system
\begin{equation}
\label{eq:MFGv}
\left\{
\begin{array}{lll}
&(i)\quad-\partial_t u-\sigma \Delta_\cH u+\frac12 |D_\cH u|^2
=F[\overline {m}_t](x)&\qquad \textrm{in }\He^1\times (0,T),\\
&(ii)\quad \partial_t m-\sigma\Delta_\cH m-\diver_\cH (m D_\cH u)=0&\qquad \textrm{in }\He^1\times (0,T),\\
&(iii)\quad m(x,0)=m_0(x), u(x,T)=G[{\overline {m}}_T](x)&\qquad \textrm{on }\He^1.
\end{array}\right.
\end{equation}
\noindent 2. We use the following two lemmas whose proofs are postponed after proof of the theorem.
\begin{lemma}\label{lm:usigma}
There is a unique bounded solution $u^\sigma$ to problem~\eqref{eq:MFGv}-$(i)$,-$(iii)$. Moreover,
\begin{itemize}
\item[(i)] $u^\sigma$ is $Q_\cH$-periodic and there exists $C>0$ (independent of $\sigma$ and of $\overline m$) such that:\\
$u^\sigma$ is semiconcave in~$x$ with semiconcavity constant~$C$, 
\[
\|u^\sigma\|_{L^\infty(\He^1\times[0,T])}\leq C,\, \|D_\cH u^\sigma\|_{L^\infty(\He^1\times[0,T])}\leq C,\, \Delta_\cH u^\sigma(x,t) \leq C, \,\forall(x,t)\in \He^1\times[0,T],
\]
\item[(ii)] for every $\tau\in[0,T)$ and $\delta\in(0,1/4]$, there exists a positive constant $\bar C$ (depending on $\tau$, $\delta$ and $\sigma$) such that
\[
\|u^\sigma\|_{C^{2+\delta}_{\cH}(\He^1\times[0,\tau])}\leq \bar C,
\]
\item[(iii)] the functions~$u^\sigma$ are $1/4$-H\"older continuous in time uniformly in~$\sigma$.
\end{itemize}
\end{lemma}

\begin{lemma}\label{lm:msigma}
Problem~\eqref{eq:MFGv}-$(ii)$,-$(iii)$ admits exactly one bounded classical solution~$m^\sigma$ in $C^0(\He^1\times[0,T])$. Moreover, $m^\sigma$ has the following properties:
\begin{itemize}
\item[(i)]  $m^\sigma$ is $Q_\cH$-periodic in~$x$ and there exists a constant~$C_0>0$ (independent of~$\sigma$ and of $\overline m$) such that
\begin{equation*}
  0\leq m^\sigma(x,t)\leq C_0 \qquad \forall (x,t)\in\He^1\times [0,T],
\end{equation*}
\item[(ii)] for every $0<t_1< t_2<T$ and $\delta\in(0,1/4]$, there exists $C_1>0$ (depending on $\sigma$, $t_1$, $t_2$ and $\delta$) such that $\|m^\sigma\|_{C^{2+\delta}_\cH(\He^1\times[t_1, t_2])}\leq C_1$.
\end{itemize}
\end{lemma}
\noindent 3. As for the Euclidean case (for instance, see \cite[Lemma 3.4]{C}) it is expedient to interpret $m^\sigma$ as the law of a suitable stochastic process. In fact, we shall adapt this approach for the present setting where $m_0$ is only a nonnegative measure on $\He^1$ and the coefficients in the SDE are unbounded. To this end, we consider a probability space $(\Omega, {\mathcal F}, P)$, equipped with a filtration~$({\mathcal F}_t)_{t\geq 0}$ and a standard $2$-dimensional $({\mathcal F}_t)$-adapted Wiener process~$W_{\cdot}$. For any $x\in \He^1$, we introduce the process
\begin{equation}\label{processMarkus}
d Y^x_t= -D_\cH u^\sigma(Y^x_t,t) B^T(Y^x_t) dt +\sqrt{2\sigma} B(Y^x_t) d W_t,\qquad Y^x_0=x.
\end{equation}
By Lemma~\ref{lm:usigma}, the coefficients in~\eqref{processMarkus} are locally Lipschitz continuous for $t\in[0,T)$ with an at most linear growth as $Y\to\infty$; hence, by standard theory  (for instance, \cite[Theorem 8.10]{Baldi} or also \cite[Theorem 9.2]{Baldi2} and \cite [theorem B.3.1]{BGL}) there exists a unique solution to~\eqref{processMarkus} in $[0,T)$.
We set
\begin{equation}\label{etaMarkus}
\eta_t^\sigma:=\int_{\He^1}{\mathcal L}(Y^x_t)dm_0(x), \quad  t\in [0,T), 
\end{equation}
where ${\mathcal L}(Y^x_t)$ is the law of the process $Y^x_t$ and we prove that it coincides with $m_t^\sigma$ in $[0,T)$:

\begin{lemma}\label{lm:etaMarkus}
The function $\eta_{\cdot}^\sigma:[0,T)\rightarrow {\mathcal M}(\He^1)$ is $Q_\cH$-periodic (in the sense of~\eqref{eq:Pper}) with $\eta_t^\sigma(Q_\cH)=1$ for every $t\in[0,T)$. Moreover, it fulfills: for a constant~$C_1>0$ (independent of $\sigma$ and~$\overline m$),  
\begin{equation}\label{etaMarkus3}
{\bf d_{1}}(\eta_t^\sigma, \eta_s^\sigma)\leq C_1(t-s)^{1/4} \qquad \forall 0\leq s\leq t<T
\end{equation}
and, for every $\phi\in C^{2,1}(\He^1\times[0,T])$ with support in a compact of~$\He^1\times [0,T]$ and $t\in(0,T)$,
\begin{equation}\label{eq:FPv_distr}
\int_{\He^1}\phi(x,t)\eta^\sigma_t(dx)=\int_{\He^1}\phi(x,0)m_0(x)dx+
\iint_{[0,t]\times\He^1} [\partial_t \phi +\sigma \Delta_\cH\phi-D_\cH u^\sigma\cdot D_\cH \phi]\eta^\sigma_s(dx)ds;
\end{equation}
in particular, it coincides with $m_t^\sigma$ in $[0,T)$.
\end{lemma}

\noindent 4.
We follow the arguments of the proof of \cite[Theorem 5.1]{C13} (see also \cite[Theorem 4.20]{C}).
By the estimates in Lemma~\ref{lm:usigma}-($ii$) and ($iii$), possibly passing to a subsequence (that we still denote by~$u^\sigma$), as $\sigma\to 0^+$, the sequence~$\{u^\sigma\}_\sigma$ uniformly converges to the function~$u$ which solves~\eqref{HJe}, is $1/4$-H\"older continuous in time and horizontally Lipschitz continuous in space, with $D_\cH u^\sigma \to D_\cH u$ a.e. (by Lemma~\ref{lm:usigma}-$(i)$ and \cite[Theorem 3.3.3]{CS}).

On the other hand, 
we note that, Lemma~\ref{lm:etaMarkus} permits to identify $m^\sigma(\cdot,t)$ with the density of a measure in $\mathcal P_{per}(\He^1)$, namely with the density of a measure on the compact set~$\Te$; moreover, by continuity (established in Lemma~\ref{lm:msigma}), the function $m^\sigma$ fulfills~\eqref{etaMarkus3} on the whole interval~$[0,T]$.
The estimates for $m^\sigma$ in Lemma~\ref{lm:msigma}-$(i)$ and in~\eqref{etaMarkus3} ensure that, as $\sigma\to 0^+$, possibly passing to a subsequence, $\{m^\sigma\}_\sigma$ converges to some $m\in C^{1/4}([0,T],\mathcal P_{per}(\He^1))$ in the $C^{0}([0,T],\mathcal P_{per}(\He^1))$-topology and in the $\mathbb L^\infty(\He^1\times(0,T))$-weak-$*$ topology; $m$ satisfies \eqref{stimaC0} with the same constant $C_0$ of Lemma~\ref{lm:msigma}-$(i)$
and \eqref{stima0.5} with the same constant $C_1$ of ~\eqref{etaMarkus3}. 
In conclusion, we accomplish the proof arguing as in \cite[Proposition 3.1(proof)]{MMMT}.
\end{proof}

We now give the proofs of the Lemmas \ref{lm:usigma}, \ref{lm:msigma}, \ref{lm:etaMarkus}.

\begin{Proofc}{Proof of Lemma \ref{lm:usigma}.}
Existence, uniqueness, semiconcavity and the first two estimates in point~$(i)$ can be proved by the same arguments of \cite[Lemma 4.1]{MMT}. The last estimate in point~$(i)$ easily follows from semiconcavity and periodicity.\\
$(ii)$. Fix $\tau$ and $\delta$ as in the statement. The Cole-Hopf transformation of $u^\sigma$, $w^\sigma(x,t):=\exp\{-u^\sigma(x,t)/(2\sigma)\}$ is bounded, $Q_\cH$-periodic and is a viscosity solution to
\begin{equation}\label{CHtrans}
\left\{
\begin{array}{lll}
&-\partial_t w^\sigma -\sigma \Delta_\cH w^\sigma + w^\sigma F[\overline {m}]/(2\sigma)=0&\qquad \textrm{in }\He^1\times (0,T),\\
&w^\sigma(x,T)=\exp\{-G[{\overline {m}}_T](x)/(2\sigma)\}&\qquad \textrm{on }\He^1;
\end{array}\right.
\end{equation}
by the equivalence between distributional solutions and viscosity solutions (established in~\cite{I95} for the elliptic case but holding also in the evolutive one), $w^\sigma$ is also a distributional solution of equation~\eqref{CHtrans}.\\
Consider a bounded domain $Q'\subset \He^1$ with $\overline{Q_\cH}\subset Q'$. Since $F[\overline {m}]$ belongs to $C^{1/4}_{\cH}(\He^1\times[0,T])$, classical results for linear subelliptic operators,~\cite[Theorem 10.7]{BBLU} and~\cite[Theorem 1.1]{BB07} ensure that, for every $0\leq t_1<t_2<T$, the function~$w^\sigma$ belongs to $C^{2+\delta}_{\cH}(Q_\cH\times[0,t_1])$ and there exists a constant $C'$ (depending on~$t_1$,$t_2$, $\sigma$ and $\delta$) such that
\begin{equation}\label{primaiii}
\|w^\sigma\|_{C^{2+\delta}_{\cH}(Q_\cH\times[0,t_1])}\leq C' \|w^\sigma\|_{L^\infty(Q'\times[0,t_2])}.
\end{equation}
Choosing $t_1=\tau$ and $t_2=(T+\tau)/2$, by periodicity and the first estimate in point~$(i)$, we accomplish the proof.

\noindent $(iii)$. We follow the arguments of~\cite[Lemma 3.4]{MMMT} and~\cite[Theorem 5.1]{C13} so we only provide their main steps. For some~$C_1>0$ (independent of~$\sigma$), the function $w^+(x,t):=G[{\overline {m}}_T](x)+ C_1(T-t)$ is a supersolution to~\eqref{eq:MFGv}-$(i)$. The standard comparison principle yields $u^\sigma(x,t)\leq G[{\overline {m}}_T](x)+ C_1(T-t)$.
Moreover, since $\|F[{\overline {m}}_t]-F[{\overline {m}}_{(t-h)}]\|_\infty\leq C_2h^{1/4}$, the function $v^\sigma_h(x,t):=u^\sigma(x,t-h)+C_1 h+C_2h^{1/4}(T-t)$ is a supersolution to \eqref{eq:MFGv}-$(i)$ with $v^\sigma_h(x,T)\geq u^\sigma(x,T)$. The comparison principle entails $u^\sigma(x,t-h)- u^\sigma(x,t)\geq-C_1 h-C_2h^{1/4}(T-t)$. The other inequality is obtained similarly.
\end{Proofc}
\begin{remark}
Under the assumptions of Lemma \ref{lm:usigma}-$(ii)$, there exists a positive constant $C$ (depending on $\tau$, $\delta$ and $\sigma$) such that
\[
\sum_{i=1}^2\|X_iu^\sigma\|_{C^{2+\delta}_{\cH}(\He^1\times[0,\tau])}+
\sum_{i,j=1}^2\|X_iX_j u^\sigma\|_{C^{2+\delta}_{\cH}(\He^1\times[0,\tau])}\leq C.
\]
Note that this estimate holds ``away'' from time~$T$; hence, the regularity of the datum~$G$ plays no role. In order to prove it, we proceed with a bootstrap of~\eqref{primaiii}.  We first remark that $W_3:=\partial_{x_3}w^\sigma=(X_1X_2 w^\sigma-X_2X_1 w^\sigma)/2=[X_1,X_2]w^\sigma/2$ is a distributional solution to the pde in~\eqref{CHtrans} with $-w^\sigma \partial_{x_3}F[\overline {m}]/(2\sigma)$ in the right-hand side; hence, again by \cite[Theorem 10.7]{BBLU} and~\cite[Theorem 1.1]{BB07}, $\partial_{x_3}w^\sigma\in C^{2+\delta}_{\cH}(Q_{\cH}\times[0,t_1])$ and there exists a constant $C'$ (depending on~$t_1$,$t_2$, $\sigma$ and $\delta$) such that
\begin{equation*}
\|\partial_{x_3}w^\sigma\|_{C^{2+\delta}_{\cH}(Q_{\cH}\times[0,t_1])}\leq C'\left(\|w^\sigma \partial_{x_3}F[\overline {m}]/(2\sigma)\|_{C^{\delta}_{\cH}(Q'\times[0,t_2])}+\|\partial_{x_3}w^\sigma\|_{L^\infty(Q'\times[0,t_2])}\right).
\end{equation*}
By periodicity and Lemma~\ref{lm:usigma}-($ii$), we deduce that there exists a constant $C$ such that
\begin{equation}\label{partial3}
\|\partial_{x_3}w^\sigma\|_{C^{2+\delta}_{\cH}(\He^1\times[0,t_1])}\leq C.
\end{equation}
Then the function $W_1:=X_1 w^\sigma$ is a distributional solution to the pde in~\eqref{CHtrans} with $4\sigma\, X_2\partial_{x_3}w^\sigma
-w^\sigma X_1F[\overline {m}]/(2\sigma)$ in the right-hand side.
Repeating the same arguments as before and by estimate~\eqref{partial3} (where we increase the value $t_1$), we get 
$\|X_1 w^\sigma\|_{C^{2+\delta}_{\cH}(\He^1\times[0,t_1])}\leq C$. By similar arguments, we also get $\|X_2 w^\sigma\|_{C^{2+\delta}_{\cH}(\He^1\times[0,t_1])}\leq C$.\\
Repeating these arguments. we obtain the bound for $X_iX_jw^\sigma$. Reversing the Cole-Hopf transformation, we obtain the desired estimates.
\end{remark}

\begin{Proofc}{Proof of Lemma \ref{lm:msigma}.}
 Lemma~\ref{lm:usigma}-$(ii)$ ensures that the drift $D_\cH u$ belongs to $C^{1+\delta}_\cH(\He^1\times[0,\tau))$ for any~$\delta\in(0,1/4]$ and~$\tau\in(0,T)$; hence, the equation~\eqref{eq:MFGv}-$(ii)$ can be written as $\partial_t m-\sigma\Delta_\cH m-D_\cH m\cdot D_\cH u - m\Delta_\cH u=0$. By standard parabolic theory (see \cite{Lie}, \cite[Theorem 10.7]{BBLU} and also \cite[Lemma 4.1]{MMT}), we get existence and uniqueness of a bounded classical solution~$m^\sigma$ to~\eqref{eq:MFGv}-$(ii)$ with $m^\sigma\in C^{0}(\He^1\times[0,T])$.\\
$(i)$. The $Q_\cH$-periodicity follows from standard arguments and uniqueness of the solution.
By~\cite[Proposition 3.1]{MMM} (see also \cite[Lemma 4.2]{MMT}), we get $0\leq m^\sigma\leq C_0$.\\
$(ii)$. By the periodicity, \cite[Theorem 10.7]{BBLU} and \cite[Theorem 1.1]{BB07}, we accomplish the proof.
\end{Proofc}

\begin{Proofc}{Proof of Lemma \ref{lm:etaMarkus}}
By standard theory, one obtains the translation formula $z\oplus Y^x_t=Y^{z\oplus x}_t$ for every $z\in\Z^3$, $x\in\He^1$, $t\in [0,T)$ and, consequently, that~$\eta^\sigma$ is $Q_\cH$-periodic. 
By the property of pavage and the periodicity of~$m_0$, one can also obtain $\eta^\sigma_t(Q_\cH)=1$.

We now prove~\eqref{etaMarkus3}. Using Remark \ref{rmk:misureper}, we denote by $\eta_t^\sigma$ also the corresponding probability measure on $Q_{\cH}$ and recall that $q_\cH(\cdot)$ is the projection introduced in section~\ref{sub:periodicity} while $\Pi$ is the set introduced in~\eqref{Pi}.
Fix $0\leq s\leq t<T$; for each $x\in\He^1$, introduce
\begin{equation*}
\tilde \pi:=\int_{\Te} {\mathcal L}(Y^{per,x}_s,Y^{per,x}_t)dm_0(x)
\end{equation*}
where $Y^{per,x}_\tau:= q_\cH(Y^{x}_\tau)$  and ${\mathcal L}(Y^{per,x}_s,Y^{per,x}_t)$ is the law of $(Y^{per,x}_s,Y^{per,x}_t)$.
Again by the translation formula $z\oplus Y^x_t=Y^{z\oplus x}_t$, the property of pavage and the periodicity of~$m_0$, one obtains: $\tilde \pi\in \Pi(\eta_{s}^\sigma,\eta_{t}^\sigma)$.
Then, there holds
\begin{eqnarray*}
{\bf d_{1}}(\eta_t^\sigma, \eta_s^\sigma)&\leq&\int_{\Te\times \Te}d_{\Te}(z_1,z_2)\tilde \pi(dz_1,dz_2)=\int_{\Te}\mathbb E[d_{\Te}(Y^{per,x}_s,Y^{per,x}_t)]dm_0(x)
\\&\leq&\int_{\Te}\mathbb E\left[|Y^{per,x}_s-Y^{per,x}_t|^{1/2}\left(1+2|Y^{per,x}_s|^{1/2}+|Y^{per,x}_s-Y^{per,x}_t|^{1/2}\right)\right]dm_0(x).
\end{eqnarray*}
where the last inequality is due to $d_\cH(x,y)\leq |x-y|+(1+|x_1|^{1/2}+|x_2|^{1/2})|x-y|^{1/2}.$
Since now on we denote by~$C$ a constant which may change from line to line but which is independent of $x,s,t,\sigma,\overline m$. Since $|Y^{per,x}_s|^{1/2},|Y^{per,x}_s-Y^{per,x}_t|^{1/2}\leq \sqrt3$,  we get
\begin{equation} \label{stimad1}
{\bf d_{1}}(\eta_t^\sigma, \eta_s^\sigma) \leq C\int_{\Te}\mathbb E\left[\left(\int_s^t |D_\cH u^\sigma B^T| d\tau\right)^{1/2}  +\sigma^{1/4}\left|\int_s^t B d W_\tau\right|^{1/2}\right]dm_0(x).
\end{equation}
We claim that there exists a constant~$C$, independent of~$\sigma$, such that
\begin{equation}\label{stimaL2Markus}
\mathbb E[|Y^{x}_\tau|^2]\leq C\qquad\forall x\in \Te, \quad 0\leq \tau<T.
\end{equation}
Indeed, we have $\mathbb E[|Y^{x}_0|^2]=|x|^2$ for every $x\in \Te$ and, by Lemma~\ref{lm:usigma}, the coefficients in the SDE~\eqref{processMarkus} are locally Lipschitz continuous and have an at most linear growth as $Y\to\infty$ since they are bounded by $4\|D_\cH u^\sigma\|_{\infty}(1+|Y|)$ (which, in turns, is bounded independently of~$\sigma$ by Lemma~\ref{lm:usigma}-$(i)$). By standard theory (see \cite[Theorem 8.10]{Baldi} or also \cite[Theorem 9.4]{Baldi2}) we deduce~\eqref{stimaL2Markus} with a constant $C$ which depends on the constant of Lemma~\ref{lm:usigma}-$(i)$ but is independent of~$\sigma$ and of the Lipschitz constant of the coefficients. Hence, our claim~\eqref{stimaL2Markus} is proved.\\
By Jensen inequality and by Fubini theorem, there holds
\begin{eqnarray*}
\int_{\Te}\mathbb E\left[\left(\int_s^t |D_\cH u^\sigma B^T| d\tau\right)^{1/2}\right]dm_0(x) &\leq&
\int_{\Te}\left(\int_s^t \mathbb E [ |D_\cH u^\sigma (Y^{x}_\tau,\tau)B^T(Y^{x}_\tau)|] d\tau\right)^{1/2}dm_0(x) \\
&\leq&C \int_{\Te}\left(\int_s^t\mathbb E[1+| Y^{x}_\tau|^2]d\tau \right)^{1/2}dm_0(x)
\end{eqnarray*}
where the last inequality is due to Lemma~\ref{lm:usigma}, the definition of~$B$ in~\eqref{matrixB} and the Cauchy-Schwarz inequality.
Using estimate~\eqref{stimaL2Markus} in the previous inequality, we obtain
\begin{equation}\label{stimad1A}
\int_{\Te}\mathbb E\left[\left(\int_s^t |D_\cH u^\sigma B^T| d\tau\right)^{1/2}\right]dm_0(x) \leq C\sqrt{t-s}.
\end{equation}

On the other hand, by Jensen inequality and Cauchy-Schwarz inequality, we get
\begin{equation*}
\begin{array}{l}
\int_{\Te}\mathbb E\left[\left|\int_s^t B d W_\tau\right|^{1/2}\right]dm_0(x)\leq
\int_{\Te}\left(\mathbb E\left[\left|\int_s^t B(Y^{x}_\tau) d W_\tau\right|\right] \right)^{1/2}dm_0(x)\\
\qquad\leq\int_{\Te}\left(\mathbb E \left[\left|\int_s^t B(Y^{x}_\tau) d W_\tau\right|^2\right] \right)^{1/4}dm_0(x)
\leq\int_{\Te}\left(\mathbb E\left[\int_s^t(1+|Y^{x}_\tau|^2)d\tau \right] \right)^{1/4}dm_0(x)
\end{array}
\end{equation*}
where the last inequality is due to standard calculus for Ito's integral.
Using again Fubini theorem and estimate~\eqref{stimaL2Markus} in the previous inequality, we get
\begin{equation}\label{stimad1B}
\int_{\Te}\mathbb E\left[\left|\int_s^t B d W_\tau\right|^{1/2}\right]dm_0(x)\leq C(t-s)^{1/4}.
\end{equation}
Replacing estimates~\eqref{stimad1A} and \eqref{stimad1B} in~\eqref{stimad1}, taking $\sigma \in(0,1)$, we obtain estimate~\eqref{etaMarkus3}.\\
Moreover, equation~\eqref{eq:FPv_distr} is due to a standard application of Ito's formula as in the Euclidean setting (for instance, see~\cite[Lemma 3.3]{C} and also~\cite[Theorem 5.7.6]{KS}). Finally, by uniqueness of distributional solution to~\eqref{eq:MFGv}-$(ii)$,-$(iii)$ (see \cite[Proposition B.2]{MMTa}), we achieve that $\eta^\sigma$ coincides with $m^\sigma$.
\end{Proofc}

\subsection{Proof of Theorem \ref{thm:main}}\label{sect:dim3.1}

\begin{Proofc}{Proof of Theorem \ref{thm:main}}
$(i)$ The set
\[
{\cal C} :=\left\{m\in C^{1/4}([0,T]; {{\cal P}_{per}(\He^1)}): \textrm{$m$ fulfills \eqref{stimaC0}-\eqref{stima0.5} and $m(0)=m_0$} \right\},
\]
endowed with the $C^0([0,T]; {{\cal P}_{per}(\He^1)})$-topology, is a nonempty, convex, compact subset of~$C^0([0,T]; {{\cal P}_{per}(\He^1)})$.
We introduce the following set valued map~${\cal T}$ on~${\cal C}$: for any $\overline m\in {\cal C}$, 
\[{\cal T}(\overline m):=\left\{m\in C^{1/4}([0,T]; {\cal P}_{per}(\He^1)):\begin{array}{l}\textrm{$m$ solves \eqref{continuitye} and fulfills \eqref{stimaC0}-\eqref{stima0.5}} \end{array}\right\}.
\]  
Clearly it is enough to prove that~${\cal T}$ has a fixed point. To this end, we apply Kakutani's Theorem; in fact, here we cannot use Schauder's theorem as in \cite[Theorem 4.1]{C} because we do not have uniqueness of the solution to \eqref{continuitye}.
We note that Theorem~\ref{prp:m}  and the linearity of~\eqref{continuitye} ensure $\emptyset\ne{\cal T}(\overline m)\subseteq {\cal C}$ and ${\cal T}(\overline m)$ is convex.
We claim that ${\cal T}$ has closed graph. Indeed, let us consider $\overline m_n, \overline m\in {\cal C}$ and 
$m_n\in {\cal T}(\overline m_n)$ with $\overline m_n\rightarrow \overline m$ and $m_n\rightarrow m$ in the $C^0([0,T]; {{\cal P}_{per}(\He^1)})$-topology.
By assumptions $(H1)$-$(H2)$, possibly passing to a subsequence (that we still denote $\overline m_n$),  Ascoli-Arzela theorem guarantees that $F[\overline m_n]$ and $G[\overline m_n(T)]$ converge uniformly to $F[\overline m]$ in  $\Te\times [0,T]$ and, respectively, to $G[\overline m(T)]$ in $\Te$. Moreover, Lemma~\ref{valuefunction} ensures that the solutions $u_n$ to problem~\eqref{HJe} with $\overline m$ replaced by $\overline m_n$ are $Q_\cH$-periodic, uniformly bounded and uniformly Lipschitz continuous. By standard stability results, the functions~$u_n$ converge uniformly to the solution~$u$ to~\eqref{HJe}.
Moreover, still by Lemma \ref{valuefunction}, the functions $u_n$ are uniformly semiconcave; hence by \cite[Theorem 3.3.3]{CS} $Du_n$ converges a.e. to $Du$.
On the other hand, by definition of ${\cal T}$, the functions $m_n\in {\cal T}(\overline m_n)$ are uniformly bounded and uniformly $1/4$-H\"older continuous, so by Ascoli-Arzela theorem and Banach-Alaoglu theorem, there exists a subsequence $\{m_{n_k}\}_k$ which converges to $m$ in the $C^0([0,T]; {{\cal P}_{per}(\He^1)})$-topology and in the $\mathbb L^\infty(\Te\times[0,T])$-weak-$*$ topology. Solving \eqref{continuitye}-\eqref{HJe} with $\overline m$ replaced by $\overline m_{n_k}$, the function $m_{n_k}$ fulfills
\begin{equation*}
\int_0^T\int_{\mathbb{H}^{1}}m_{n_k}(-\partial_t\varphi+D_{\cH}u_{n_k}\cdot D_{\cH}\varphi)dxdt=0\qquad \forall \varphi\in C_{c}^{\infty}(\mathbb{H}^{1}\times (0,T)).
\end{equation*}
Passing to the limit as $k\rightarrow +\infty$ we get that $m$ is a solution to \eqref{continuitye}. Moreover again by the uniform convergence and the uniform $1/4$-H\"older continuity of $m_{n_k}$, $m$ satisfies \eqref{stimaC0}-\eqref{stima0.5}. In conclusion $m\in {\cal T}(\overline m)$ and our claim is proved.
Then, Kakutani's Theorem guarantees the existence of a fixed point for~${\cal T}$.

$(ii)$ Consider the function $m$ found in point $(i)$. Since $t\rightarrow m_t$ is narrowly continuous, applying Theorem \ref{821}, we get that there exists a probability measure $\eta^*$ in $\Te\times\Gamma$ which satisfies points $(i)$ and $(ii)$ of Theorem \ref{821}.
We denote $\eta\in {\mathcal P}(\Gamma)$ the measure on $\Gamma$ defined as $\eta(A):=\eta^*(\Te\times A)$ for every $A\subset \Gamma$ measurable. We claim that $\eta$ is a MFG equilibrium. Indeed, by \eqref{821nostra}, we have $e_0\#\eta=m_0$ and $e_t\#\eta\in\mathcal P_{per}(\He^1)$,
so $\eta\in {\mathcal P}_{m_0}(\Gamma)$. 
Moreover from $(i)$ of Theorem \ref{821}, $\eta$ is supported on the curves solving \eqref{ODE}. From Lemma \ref{BB} such curves are optimal, i.e. they belong to the set $\Gamma^{\eta}[x]$. Hence $\eta$ is a MFG equilibrium, our claim is proved.\\
Let us now prove that $(u,m)$ is a mild solution. By \eqref{821nostra}, we have $m_t=e_t\#\eta$. Moreover, by Lemma \ref{valuefunction}, the function $u$ found in point~$(i)$ is the value function associated with $m$  as in Definition \ref{mild}-$(ii)$. In conclusion $(u,m)$ is a mild solution to \eqref{eq:MFGintrin}.
\end{Proofc}
\begin{remark}
An alternative proof of Theorem \ref{thm:main}-$(i)$ could be as follow. First one obtains the existence of a solution to system
\begin{equation}
\label{eq:MFGvv}
\left\{
\begin{array}{lll}
&(i)\quad-\partial_t u^\sigma-\sigma \Delta_\cH u^\sigma+\frac12 |D_\cH u^\sigma|^2
=F[m^\sigma_t](x)&\qquad \textrm{in }\He^1\times (0,T),\\
&(ii)\quad \partial_t m^\sigma-\sigma\Delta_\cH m^\sigma-\diver_\cH (m^\sigma D_\cH u^\sigma)=0&\qquad \textrm{in }\He^1\times (0,T),\\
&(iii)\quad m^\sigma(x,0)=m_0(x), u^\sigma(x,T)=G[m^\sigma_T](x)&\qquad \textrm{on }\He^1.
\end{array}\right.
\end{equation}
applying Schauder fixed point theorem to the map 
 $\overline{\cal T}:{\cal C}\rightarrow {\cal C}$, $\overline{\cal T}(\overline m)=m$ where $m$ solves~\eqref{eq:MFGv}-$(ii)$ (and $u$ solves~\eqref{eq:MFGv}-$(i)$). After, as $\sigma\to 0$, by arguments similar to the above ones, one gets a solution to \eqref{eq:MFGintrin}.
\end{remark}

\noindent{\bf Acknowledgments.} 
The authors are grateful to the anonymous referees for their fruitful comments
and suggestions.
The first author is member of the Gnampa-INDAM group and she was partially supported by the MUR Excellence Department Project 2023-2027 MatMod@Tov CUP:E83C23000330006 awarded to the Department of Mathematics, University of Rome Tor Vergata.
The second and the third authors are members of GNAMPA-INdAM and were partially supported by the research project of the University of Padova ``Mean-Field Games and Nonlinear PDEs'' and by the Fondazione CaRiPaRo Project ``Nonlinear Partial Differential Equations: Asymptotic Problems and Mean-Field Games''. The forth author has been partially funded by the ANR project ANR-16-CE40-0015-01.

{\small{
 }

\end{document}